\documentclass[11pt,reqno]{amsart}
\usepackage{amsmath,amsthm,amsfonts,amssymb,graphicx,mathtools,enumerate}
\usepackage[lmargin=35mm,rmargin=35mm,bmargin=30mm,tmargin=30mm]{geometry}
\usepackage[usenames,dvipsnames,svgnames,table]{xcolor}
\usepackage[foot]{amsaddr}
\usepackage[pdftitle={Separation Dimension and Degree},colorlinks=true,citecolor=black,linkcolor=black,urlcolor=blue!80!black]{hyperref}
\usepackage[noabbrev,capitalise]{cleveref}
\usepackage[numbers,sort&compress]{natbib}
\allowdisplaybreaks
\makeatletter
\def\NAT@spacechar{~}
\makeatother
%%%%%%    

\sloppy
%%%%%%%%%%
\theoremstyle{plain}
\newtheorem{thm}{Theorem}
\newtheorem{lem}[thm]{Lemma}
\newtheorem{cor}[thm]{Corollary}

\crefname{lem}{Lemma}{Lemmas}
\crefname{thm}{Theorem}{Theorems}
\crefname{cor}{Corollary}{Corollaries}
\crefname{claim}{Claim}{Claims}
%%%%%%%%%%
\newcommand{\arXiv}[1]{arXiv:\,\href{http://arxiv.org/abs/#1}{#1}}
\newcommand{\msn}[1]{MR:\,\href{http://www.ams.org/mathscinet-getitem?mr=MR#1}{#1}}
\newcommand{\doi}[1]{doi:\,\href{http://doi.org/#1}{#1}}
%%%%%%%%%%
\DeclarePairedDelimiter\ceil\lceil\rceil
\DeclarePairedDelimiter\floor\lfloor\rfloor
%%%%%%%%%%

\renewcommand{\geq}{\geqslant}
\renewcommand{\leq}{\leqslant}
\newcommand{\comment}[1]{\noindent\textcolor{red}{[#1]}}
%%%%%%%%%%

\begin{document}

\title{Separation Dimension and Degree}

\author{Alex Scott}
\address[A. Scott]{Mathematical Institute, University of Oxford, Oxford OX2 6GG, United Kingdom}
\email{scott@maths.ox.ac.uk}
\thanks{Alex Scott is supported by a Leverhulme Research Fellowship.}

\author{David R. Wood}
\address[D. R. Wood]{School of Mathematical Sciences, Monash University, Melbourne, Australia}
\email{david.wood@monash.edu}

\subjclass[2010]{05C62}

\date{\today}

\maketitle

\begin{abstract} 
The \emph{separation dimension} of a graph $G$ is the minimum positive integer $d$ for which there is an embedding of $G$ into $\mathbb{R}^d$, such that every pair of disjoint edges are separated by some axis-parallel hyperplane. We prove a conjecture of Alon~et~al.~[\emph{SIAM J. Discrete Math.} 2015] by showing that every graph with maximum degree $\Delta$ has separation dimension less than $20\Delta$, which is best possible up to a constant factor. We also prove that graphs with separation dimension 3 have bounded average degree and bounded chromatic number, partially resolving an open problem by Alon et~al.~[\emph{J. Graph Theory} 2018].  
\end{abstract}

\section{Introduction}

%The \emph{separation dimension} of a hypergraph $G$ is the minimum number of orderings of $V(G)$ such
%that for all disjoint edges $e$ and $f$ in $G$, in some ordering, all the vertices in $e$ appear before all the vertices in $f$ \citep{BCGMR14,BCGMR16,ABCMR15,LW18,BDL17,ZRMGD18}

This paper studies the separation dimension of graphs and its relationship with maximum and average degree. For a graph $G$, a function $f:V(G)\rightarrow \mathbb{R}^d$ is \emph{separating} if for all disjoint edges $vw,xy\in E(G)$ there is an axis-parallel hyperplane that separates the pair of points $\{f(v),f(w)\}$ from the pair $\{f(x),f(y)\}$. 
The \emph{separation dimension} of a graph $G$ is the minimum positive integer $d$ for which there is a $d$-dimensional separating function for $G$; see \citep{ABCMR18,BCGMR14,BCGMR16,ABCMR15,LW18,BDL17,ZRMGD18} for recent work on the separation dimension of graphs. 

This topic can also be thought of more combinatorially. Edges $e$ and $f$ in a  graph $G$ are \emph{separated} in a linear ordering of $V(G)$ if both endpoints of $e$ appear before both endpoints of $f$, or both endpoints of $f$ appear before both endpoints of $e$. A \emph{representation} of  $G$ is a non-empty set of linear orderings of $V(G)$.  A \emph{representation} $\mathcal{R}$ of $G$ is \emph{separating} if every pair of disjoint edges in $G$ are separated in at least one ordering in $\mathcal{R}$. It is easily seen that the separation dimension of $G$ equals the minimum size of a separating representation of $G$; see \citep{ABCMR18,BCGMR14,BCGMR16,ABCMR15,CMS11}.

A fundamental question is the relationship between separation dimension and maximum degree. \citet{CMS11} proved that every graph with maximum degree $\Delta$ has separation dimension at most $2\Delta (\ceil{\log_2 \log_2 \Delta} + 3) + 1$. \citet{ABCMR15} improved this bound to $2^{9\log^*(\Delta)}\Delta$, and conjectured that a stronger $O(\Delta)$ bound should hold.  We prove this  conjecture.

\begin{thm}
\label{MaxDegree}
Every graph with maximum degree $\Delta\geq 1$ has separation dimension less than $20\Delta$. 
\end{thm}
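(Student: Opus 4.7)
My plan is to combine Vizing's theorem with a probabilistic argument via the Lovász Local Lemma.

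First I would apply Vizing's theorem to properly edge-color $G$ with $\Delta + 1$ colors, obtaining matchings $M_0, \dots, M_\Delta$. For each color $i$, I form a random linear ordering $\sigma_i$ of $V(G)$ by listing the matched pairs of $M_i$ in a uniformly random order, each pair placed as a consecutive block (internal order fixed), and appending the singletons at the tail. Such a $\sigma_i$ deterministically separates every disjoint pair within $M_i$, so only cross-color disjoint pairs remain.

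For a cross-color disjoint pair $(e,f)$ with $c(e)=i \ne c(f)=j$, a case analysis on the matching structure shows that $\sigma_i$ separates $(e,f)$ with probability at least $2/3$: in $\sigma_i$, the pair $e$ appears as one block and the endpoints $x,y$ of $f$ either lie in two other $M_i$-blocks (three random blocks are correctly separated with probability $2/3$) or in the singleton tail (probability at least $1/2$, and $1$ when both are unmatched in $M_i$). Symmetrically $\sigma_j$ separates $(e,f)$ with probability at least $2/3$. For each remaining color $k$, $\sigma_k$ separates $(e,f)$ with probability at least $1/3$. Since the $\sigma_i$ are built from independent random permutations, the probability that none of the $\Delta+1$ Vizing orderings separates $(e,f)$ is at most $(1/3)^2(2/3)^{\Delta-1}$, already exponentially small in $\Delta$.

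Next I augment the family with roughly $19\Delta$ independent uniform random orderings of $V(G)$, each separating a fixed disjoint pair with probability $1/3$. Keeping the total number of orderings strictly less than $20\Delta$, each bad event $A_{e,f}$ then has probability at most $(2/3)^{20\Delta - O(1)}$. I would then apply the Lovász Local Lemma: $A_{e,f}$ depends only on random variables localized to $\{v,w,x,y\}$ and their matching-partners across the $\Delta+1$ Vizing colors (together with the $k$ uniform labels of these four vertices), so $A_{e,f}$ and $A_{e',f'}$ are mutually independent whenever these ``closed neighborhoods'' in $V(G)$ are disjoint.

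The hardest step is controlling the dependency degree: a naive count is $\Theta(\Delta \cdot |E(G)|)$, which would yield only an $O(\log |V(G)|)$ bound and could not produce a dimension that is independent of $n$. To close the gap I would leverage the Vizing structure to argue that the genuinely dependent events are limited to a $\mathrm{poly}(\Delta)$-size neighborhood of $(e,f)$, and apply either an asymmetric or lopsided LLL variant, or an entropy-compression argument in the style of Moser--Tardos. Tuning the constant in the number of uniform random orderings then balances the exponentially small event probability against the $\mathrm{poly}(\Delta)$ dependency to yield the stated bound strictly less than $20\Delta$.
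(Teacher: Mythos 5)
The core of your plan---Vizing orderings plus $\Theta(\Delta)$ uniform random orderings, finished off by the Local Lemma---founders on exactly the step you flag as ``the hardest'': the dependency degree genuinely is $\Theta(\Delta\,|E(G)|)$ and cannot be pushed down to $\mathrm{poly}(\Delta)$. The event $A_{e,f}$ and the event $A_{e,f'}$, for the \emph{same} edge $e$ and an arbitrary disjoint edge $f'$ anywhere in the graph, share the random positions of the endpoints of $e$ in every ordering, so they are truly dependent; there are $\Theta(|E(G)|)$ such events, and no amount of Vizing structure localises them. Neither the lopsided LLL nor Moser--Tardos/entropy compression escapes this: both still charge for the true dependency degree (the witness-tree branching factor is governed by it). With $p\approx(2/3)^{19\Delta}$ and $D\approx 4\Delta|E(G)|$, the condition $4pD\le 1$ forces $\Delta=\Omega(\log|E(G)|)$, so your argument yields a bound of order $\Delta+\log n$, not $O(\Delta)$. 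This is precisely why the problem was open: the direct probabilistic attack gives $O(\Delta\log n)$, and the earlier iterated refinements of it only got the bound down to $2\Delta(\lceil\log_2\log_2\Delta\rceil+3)+1$ and $2^{9\log^*\Delta}\Delta$. (A smaller point: your ``remaining colour'' orderings $\sigma_k$ separate a fixed disjoint pair with probability $0$, not $1/3$, when all four endpoints land in a deterministically ordered singleton tail.)

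The paper takes a different route that avoids any global probabilistic argument over pairs of disjoint edges. It strengthens the invariant to a ``strongly separating'' representation (your Vizing idea does appear here, in \cref{WeakStrong}, to pass from separating to strongly separating at a cost of $2\Delta+2$ orderings), and then recurses: a frugal colouring (\cref{Colour}, proved with LLL and Chernoff applied only to vertex-neighbourhood events, where the dependency degree is $O(k\Delta^2)$) splits $V(G)$ into $k\approx\Delta^{1/4}$ classes so that every induced part $G[V_i]$ and every bipartite part $G[V_i,V_j]$ has maximum degree about $\Delta^{3/4}$. These pieces are handled by induction on $\Delta$, and \cref{partition} stitches them together using only $O(\log k)=O(\log\Delta)$ extra orderings obtained from a separating family for the complete graph with loops on the $k$ quotient classes (\cref{Loopy}, a plain union bound over a $k$-vertex graph). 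The losses $2s+(k-1)t+20\log k$ telescope geometrically down the recursion, which is what produces a clean $O(\Delta)$. That recursive decomposition is the essential idea missing from your proposal.
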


This linear bound is best possible up to a constant factor, since \citet{ABCMR15} proved that almost every $\Delta$-regular graph has separation dimension at least $\frac{\Delta}{2}$. \cref{MaxDegree} is proved in \cref{ProofMaxDegree}.

\cref{ProofAvgDeg} of this paper considers the following natural extremal question, first posed by \citet{ABCMR18}:  What is the maximum average degree of an $n$-vertex graph with separation dimension $s$? Every graph with separation dimension at most $2$ is planar, and thus has average degree less than $6$. For $s\geq 3$,  \citet{ABCMR18} proved the best  known upper bound on the average degree of $O(\log^{s-2} n)$, and asked %. \citet[Open~Problem~1.7]{ABCMR18}  asked 
whether graphs with bounded separation dimension have bounded degeneracy (or equivalently, bounded average degree). We answer the first open case of this problem.

\begin{thm}
\label{AvgDeg}
There is a constant $c$ such that every graph with separation dimension 3 has average degree at most $c$.
\end{thm}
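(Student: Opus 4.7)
The plan is to show that every graph with separation dimension $3$ is $c$-degenerate for an absolute constant $c$; since $c$-degenerate graphs have average degree at most $2c$, this proves \cref{AvgDeg}. I work with the geometric reformulation: a graph $G$ has separation dimension at most $3$ if and only if there is an injection $\phi\colon V(G)\to\mathbb{R}^3$, with distinct coordinates on each axis, such that any two vertex-disjoint edges of $G$ have disjoint axis-aligned bounding boxes $B_e$. (Two axis-aligned boxes in $\mathbb{R}^d$ are separated by an axis-parallel hyperplane if and only if they are disjoint.)

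The structural backbone is a \emph{box-emptiness lemma}: for any such embedding $\phi$ and any edge $e=xy$ of $G$, if some vertex $w\notin\{x,y\}$ lies in $B_e$, then every neighbour $z$ of $w$ must satisfy $z\in\{x,y\}$, because otherwise $wz$ and $e$ are vertex-disjoint while $\phi(w)\in B_{wz}\cap B_e$. Hence $\deg_G(w)\le 2$. By subgraph-monotonicity of separation dimension, it therefore suffices to derive a contradiction from the assumption that $G$ has separation dimension $3$ and minimum degree at least $c$, for a sufficiently large absolute constant $c\ge 3$: this yields $c$-degeneracy and hence the required bound on average degree. Under this assumption every edge $e=xy$ of $G$ has $B_e\cap V(G)=\{x,y\}$, i.e.\ its bounding box is \emph{vertex-empty}.

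Now fix such $G$ and $\phi$, and pick any vertex $v$. The $\ge c$ neighbours of $v$ distribute among the $8$ coordinate octants relative to $\phi(v)$, so by pigeonhole some octant contains at least $t\ge c/8$ neighbours, say $u_1,\dots,u_t$; WLOG this is the strictly positive octant. Applying box-emptiness to edges $vu_i$: for $i\ne j$, $u_j\notin B_{vu_i}$, and since $u_j$ lies coordinate-wise above $v$, this forces $u_j$ to have some coordinate strictly greater than the corresponding coordinate of $u_i$; by symmetry, $\{u_1,\dots,u_t\}$ forms an antichain of size $t$ in the coordinate-wise partial order on $\mathbb{R}^3$.

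The final and hardest step is to promote this $3$-dimensional antichain of high-degree neighbours of $v$ into a contradiction once $c$ is large enough, by exploiting the disjoint-box condition for pairs of edges among the $u_i$ and for pairs relating them to the edges $vu_k$. For example, for any edge $u_iu_j\in E(G)$ and any $u_k\in\{u_1,\dots,u_t\}\setminus\{u_i,u_j\}$, the boxes $B_{u_iu_j}$ and $B_{vu_k}$ must be axis-disjoint, which (since all these points lie in the positive octant of $v$) forces $u_k$ to have some coordinate strictly smaller than both $u_i$ and $u_j$. The plan is to iterate Erd\H{o}s--Szekeres and Dilworth-type arguments on the coordinate-wise structure of the antichain --- projecting to pairs of coordinates to find long monotone subsequences --- to extract either a coordinate-wise chain among the $u_i$ (forbidden by box-emptiness) or a sufficiently dense bipartite substructure forcing a $K_{s,s}$-subgraph, the latter contradicting the $\Omega(\log s)$ lower bound on the separation dimension of $K_{s,s}$ due to \citet{ABCMR15}. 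The quantitative passage from ``large $3$D antichain of neighbours'' to an absolute bound on $c$ is where I expect the genuine combinatorial difficulty to lie.
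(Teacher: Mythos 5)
Your setup is sound as far as it goes: the equivalence between separation by axis-parallel hyperplanes and disjointness of bounding boxes is correct (given distinct coordinates), the box-emptiness observation is valid, and the deduction that the neighbours of $v$ in a fixed octant form an antichain in the coordinate-wise order is right. The reduction from average degree to minimum degree via degeneracy is also fine. But the proof stops exactly where the theorem actually lives. A large antichain in the positive octant of $\mathbb{R}^3$ is not in itself contradictory --- take points near a plane $x+y+z=\mathrm{const}$ --- so the local structure you have extracted is perfectly consistent with arbitrarily large minimum degree, and everything after ``the final and hardest step'' is a declared intention rather than an argument. Your two proposed escape routes both fail as stated: a coordinate-wise chain among the $u_i$ cannot be extracted because you have already proved they form an antichain, and the $K_{s,s}$ route presupposes both a supply of edges among or between the $u_i$ and their neighbourhoods (which need not exist; the graph may be bipartite with $v$'s neighbours an independent set) and a quantitative mechanism for locating a complete bipartite subgraph that is nowhere specified. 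So this is a genuine gap, not a detail to be filled in.

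For comparison, the paper's proof of \cref{SepDim3AvgDeg} also begins by normalising the structure --- via \cref{ForceHomo} it passes to a bipartite subgraph with minimum degree $4$ whose three orders are \emph{consistent} (all of $A$ precedes its neighbours in every order) and \emph{$A$-homogeneous} (each $A$-vertex sees its neighbourhood in a fixed order or its reverse in every coordinate), using Erd\H{o}s--Szekeres and pigeonhole much as you anticipate --- but the contradiction then comes from a genuinely global argument: after a case split on the sign pattern $(a_1,a_2,a_3)$, one marks the rightmost edge at each $A$-vertex, finds a cycle of unmarked edges, takes its leftmost vertex in $<_1$, and shows that two specific disjoint edges of the cycle cannot be separated in any of the three orders. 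Some device of this kind, exploiting structure beyond a single vertex's neighbourhood, is what your proposal is missing.
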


%%%%%%%%%%%%%%%%%%%%%%%
\section{A Colouring Lemma}
\label{ColouringLemma}

%, the distinguishing feature of the following lemma is that $kd=(1+o(1))\Delta$ whenever $k\leq o(\frac{\Delta}{\log \Delta})$.

This section proves a straightforward lemma that shows how to colour a graph so that each vertex has few neighbours of each colour (\cref{Colour}). Several previous papers have proved similar results \citep{CMS11,FK86,SW18,KM11,MR10,HMR97}. The proof depends on the following two standard probabilistic tools. Let $[k]:=\{1,2,\dots,k\}$. 

\begin{lem}[Lov\'asz Local Lemma \citep{EL75}] 
\label{LLL}
Let $E_1,\dots,E_n$ be events in a probability space, each with probability at most $p$ and mutually independent of all but at most $D$ other events. If $4pD\leq 1$ then with positive probability, none of  $E_1,\dots,E_n$ occur. 
\end{lem}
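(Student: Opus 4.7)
The plan is to prove the Lemma by the standard inductive argument due to Erd\H{o}s and Lov\'asz. The heart of the matter is an upper bound on conditional probabilities of the form $\Pr[E_i \mid \bigcap_{j\in S}\overline{E_j}]$. I would aim to show, by induction on $|S|$, that for every index $i$ and every $S\subseteq [n]\setminus\{i\}$,
\[
\Pr\Bigl[E_i \,\Big|\, \bigcap_{j\in S}\overline{E_j}\Bigr] \le 2p.
\]
The base case $S=\emptyset$ is immediate from $\Pr[E_i]\le p$. Once this inequality is in hand, the conclusion follows quickly: using the chain rule,
\[
\Pr\Bigl[\bigcap_{i=1}^n \overline{E_i}\Bigr] = \prod_{i=1}^n \Pr\Bigl[\overline{E_i} \,\Big|\, \bigcap_{j<i}\overline{E_j}\Bigr] \ge (1-2p)^n,
\]
which is strictly positive because $4pD\le 1$ forces $2p\le \tfrac{1}{2D}<1$.

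For the inductive step, I would split $S$ into $S_1$, the indices of events on which $E_i$ is \emph{not} mutually independent (so $|S_1|\le D$), and $S_2:=S\setminus S_1$. Writing
\[
\Pr\Bigl[E_i \,\Big|\, \bigcap_{j\in S}\overline{E_j}\Bigr] \;=\; \frac{\Pr\bigl[E_i \cap \bigcap_{j\in S_1}\overline{E_j} \,\big|\, \bigcap_{j\in S_2}\overline{E_j}\bigr]}{\Pr\bigl[\bigcap_{j\in S_1}\overline{E_j} \,\big|\, \bigcap_{j\in S_2}\overline{E_j}\bigr]},
\]
the numerator is bounded above by $\Pr[E_i\mid \bigcap_{j\in S_2}\overline{E_j}]$, which equals $\Pr[E_i]\le p$ by mutual independence of $E_i$ from the events indexed by $S_2$. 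For the denominator, I would apply the union bound together with the inductive hypothesis (each of the conditioning sets involved is a strict subset of $S$) to obtain
\[
\Pr\Bigl[\bigcup_{j\in S_1} E_j \,\Big|\, \bigcap_{j\in S_2}\overline{E_j}\Bigr] \le |S_1|\cdot 2p \le 2pD \le \tfrac12,
\]
so the denominator is at least $\tfrac12$. Combining gives the desired bound $2p$.

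The only subtle point is making sure the two probabilities $\Pr[E_i\mid \bigcap_{j\in S_2}\overline{E_j}]$ and $\Pr[E_i]$ really are equal; this uses the precise notion of mutual independence from a family of events (not just pairwise independence), which is why the hypothesis was stated that way. Degenerate cases where the conditioning event has probability zero, or where $S_1=\emptyset$, can be handled separately but cause no real difficulty. The main obstacle, such as it is, is setting up the right inductive statement in the first place: the natural quantity one wants to bound is the unconditional $\Pr[\bigcap \overline{E_i}]$, but the induction only goes through if one strengthens the claim to a uniform bound on all conditional probabilities of the above form.
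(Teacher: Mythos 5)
The paper does not prove this lemma at all: it is quoted as a standard tool with a citation to Erd\H{o}s and Lov\'asz, so there is no in-paper argument to compare against. Your proposal is the standard inductive proof of the symmetric Local Lemma and is correct as outlined: the strengthened claim $\Pr[E_i \mid \bigcap_{j\in S}\overline{E_j}]\le 2p$, the split of $S$ into the at most $D$ dependent indices and the rest, the bound $p$ on the numerator via mutual independence, and the lower bound $\tfrac12$ on the denominator via the union bound and the inductive hypothesis all go through exactly as you describe. The only (pedantic) caveat is the degenerate case $D=0$, where your final step $2p\le\frac{1}{2D}$ is undefined and the conclusion requires $p<1$; this case is irrelevant to the paper's application, where $D=k\Delta^2\ge 1$.
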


\begin{lem}[Chernoff Bound \citep{MU05}]
\label{Chernoff}
Let $X_1,\dots,X_n$ be independent random variables, where $X_i=1$ with probability $p$ and $X_i=0$ with probability $1-p$. Let $X := \sum_{i=1}^n X_i$. Then for $\delta>0$, 
\begin{equation*}
\mathbb{P}( X \geq (1+ \delta)pn ) \leq e^{-\delta^2 pn/3}.
\end{equation*}
\end{lem}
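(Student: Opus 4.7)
The plan is the classical exponential-moment (Chernoff) argument, which converts a tail bound on a sum of independent variables into an optimisation over a free real parameter. The strategy has three steps: reduce the tail probability to a moment generating function estimate via Markov, use independence to factor the MGF, then optimise.

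First I would introduce a parameter $t>0$ to be fixed later and apply Markov's inequality to the nonnegative random variable $e^{tX}$:
$$\mathbb{P}(X \geq (1+\delta)pn) = \mathbb{P}\bigl(e^{tX} \geq e^{t(1+\delta)pn}\bigr) \leq e^{-t(1+\delta)pn}\,\mathbb{E}[e^{tX}].$$
By independence of the $X_i$, the MGF factors as
$$\mathbb{E}[e^{tX}] = \prod_{i=1}^n \mathbb{E}[e^{tX_i}] = \bigl(1-p+pe^t\bigr)^n \leq \exp\bigl(pn(e^t-1)\bigr),$$
where the last step uses the elementary inequality $1+y \leq e^y$ with $y = p(e^t-1)$.

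Combining these two estimates and minimising the resulting exponent over $t>0$, I would differentiate and solve to obtain the optimal choice $t = \ln(1+\delta)$. Substituting back gives
$$\mathbb{P}(X \geq (1+\delta)pn) \leq \exp\bigl(-pn\bigl[(1+\delta)\ln(1+\delta)-\delta\bigr]\bigr).$$

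The final step, and the only place the particular constant $1/3$ enters, is the real-analysis inequality $(1+\delta)\ln(1+\delta) - \delta \geq \delta^2/3$ on the relevant range of $\delta$. I would verify it by setting $g(\delta) := (1+\delta)\ln(1+\delta) - \delta - \delta^2/3$, noting $g(0) = g'(0) = 0$, and checking the sign of $g''(\delta) = 1/(1+\delta) - 2/3$. This pointwise inequality is the sole genuinely analytic ingredient; every other step is mechanical, so I expect this last estimate to be the only real obstacle.
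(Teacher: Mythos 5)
The paper does not prove this lemma at all --- it is imported by citation from Mitzenmacher and Upfal --- so your argument has to stand on its own. The exponential-moment skeleton is exactly the standard route and is fine as far as it goes: Markov on $e^{tX}$, factoring the MGF by independence, the bound $1+p(e^t-1)\le e^{p(e^t-1)}$, and optimising at $t=\ln(1+\delta)$ correctly yield $\mathbb{P}(X\ge(1+\delta)pn)\le\exp(-pn[(1+\delta)\ln(1+\delta)-\delta])$.

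The gap is in the final analytic step, and it is a real one. The inequality $(1+\delta)\ln(1+\delta)-\delta\ge\delta^2/3$ is \emph{false} for large $\delta$: at $\delta=2$ the left side is $3\ln 3-2\approx 1.296$ while the right side is $4/3$, and the inequality fails for all $\delta\gtrsim 1.81$. So your proof cannot establish the lemma for all $\delta>0$ as stated --- and in fact the statement in that generality is itself false: for $X\sim\mathrm{Bin}(n,\mu/n)$ with $n$ large, $\mu$ large and $\delta=2$, the single term $\mathbb{P}(X=3\mu)\ge e^{-\mu(3\ln 3-2)-O(\log\mu)}$ already exceeds $e^{-4\mu/3}$. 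The source [MU05, Theorem 4.4] states this form only for $0<\delta\le 1$, which is the range the paper actually needs (in the application in \cref{Colour} via \cref{MaxDegreeProof} the relevant $\delta$ is far below $1$). Even restricted to $(0,1]$, your verification is one line short: $g''(\delta)=\tfrac{1}{1+\delta}-\tfrac23$ changes sign at $\delta=\tfrac12$, so checking its sign does not finish the argument. Instead, note that $g'(\delta)=\ln(1+\delta)-\tfrac23\delta$ increases on $[0,\tfrac12]$ from $g'(0)=0$ and decreases on $[\tfrac12,1]$ to $g'(1)=\ln 2-\tfrac23>0$; hence $g'\ge 0$ on $[0,1]$, so $g\ge g(0)=0$ there, which proves the pointwise inequality on the range where it is true.
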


\begin{lem}
\label{Colour}
For all positive integers $k$ and $\Delta$, 
for every graph $G$ with maximum degree at most $\Delta$, 
there is a partition $V_1,\dots,V_k$ of $V(G)$ such that for every vertex $v\in V(G)$ and integer $i\in[k]$, 
\begin{equation*}
|N_G(v)\cap V_i| < d := \frac{\Delta}{k} + \sqrt{\frac{3 \Delta\log(4k\Delta^2)}{k} }.
\end{equation*}
\end{lem}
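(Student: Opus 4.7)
The plan is a standard application of the Lovász Local Lemma (\cref{LLL}) with the Chernoff Bound (\cref{Chernoff}) used to bound the probability of each bad event. I would colour each vertex of $G$ independently and uniformly at random with a colour from $[k]$, letting $V_i$ be the set of vertices assigned colour $i$. For each pair $(v,i) \in V(G) \times [k]$, let $E_{v,i}$ be the bad event that $|N_G(v)\cap V_i| \geq d$; it suffices to show that with positive probability none of these events occur.

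To bound $\mathbb{P}(E_{v,i})$, observe that $X := |N_G(v)\cap V_i|$ is a sum of independent Bernoulli$(1/k)$ indicators, one per neighbour of $v$. Padding with dummy indicators (this only increases $X$) we may assume $|N_G(v)| = \Delta$, so $\mathbb{E}[X] = \Delta/k$. I would choose $\delta>0$ with $(1+\delta)\Delta/k = d$, i.e., $\delta = \sqrt{3k\log(4k\Delta^2)/\Delta}$, so that $\delta^2\Delta/(3k) = \log(4k\Delta^2)$. Then \cref{Chernoff} yields $\mathbb{P}(E_{v,i}) \leq e^{-\log(4k\Delta^2)} = 1/(4k\Delta^2)$.

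For the dependency structure: the event $E_{v,i}$ is determined by the colours of the neighbours of $v$, and so is mutually independent of all events $E_{u,j}$ for which $N_G(u)\cap N_G(v) = \emptyset$. The number of vertices $u$ sharing a neighbour with $v$ is at most $\Delta^2$, so each $E_{v,i}$ depends on at most $k\Delta^2$ other events. Applying \cref{LLL} with $p = 1/(4k\Delta^2)$ and $D = k\Delta^2$ gives $4pD = 1$, so with positive probability none of the bad events hold, producing a partition $V_1,\dots,V_k$ with the required property.

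The main (and essentially only) delicacy is the tuning: the particular form of $d$ in the statement is engineered so that the Chernoff tail matches the $1/(4D)$ threshold required by LLL on the nose. Once $\delta$ is chosen as above, the two inequalities mesh exactly, and no slack is wasted. The remaining steps are routine bookkeeping about which events share random variables.
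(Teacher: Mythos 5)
Your proposal is correct and follows essentially the same route as the paper: the same uniform random colouring, the same bad events, the same choice of $\delta$ making the Chernoff tail equal to $(4k\Delta^2)^{-1}$, and the same dependency bound $k\Delta^2$ for the Local Lemma. The only cosmetic difference is that you pad with dummy indicators where the paper appeals to monotonicity of the tail probability in $\deg(v)$; both are fine.
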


\begin{proof}
Independently and randomly colour each vertex with one of $k$ colours. For each vertex $v\in V(G)$ and colour $c$, let $A_{v,c}$ be the event that at least $d$ neighbours of $v$ are all assigned colour $c$. Each event is mutually independent of all but at most $k\Delta^2$ other events. 

We now prove that $\mathbb{P}(A_{v,c}) \leq (4k\Delta^2)^{-1}$. 
Since $\mathbb{P}(A_{v,c})$ is increasing with $\deg(v)$, we may assume that $\deg(v)=\Delta$. 
Say $w_1,\dots,w_\Delta$ are the neighbours of $v$. 
For $i\in[\Delta]$, let $X_i:=1$ if $w_i$ is coloured $c$, otherwise let $X_i:=0$. 
Then $\mathbb{P}(X_i)=p :=\frac{1}{k}$. Let $X := \sum_{i=1}^\Delta X_i$. 
Then $A_{v,c}$ holds if and only if $X \geq d$. 
Let $\delta:= \frac{dk}{\Delta}-1$, so $d=(1+ \delta)p\Delta$. 
Then $\mathbb{P}(A_{v,c}) = \mathbb{P}(X \geq d) = \mathbb{P}( X \geq (1+\delta)p\Delta)$. 
Now
\begin{equation*} 
\frac{\delta^2p\Delta}{3}
= \frac13 \left( \frac{dk}{\Delta}-1\right)^2p\Delta
= \log(4k\Delta^2).
\end{equation*}
By \cref{Chernoff} with $n=\Delta$, 
\begin{equation*}
\mathbb{P}(A_{v,c})  \leq e^{-\delta^2 p\Delta/3}  = (4k\Delta^2)^{-1},
\end{equation*}
as claimed. By \cref{LLL}, with positive probability no event occurs, implying the desired partition exists. 
\end{proof}

%%%%%%%%%%%%%%%%%%%%%%%
\section{Proof of \cref{MaxDegree}}
\label{ProofMaxDegree}

Our proof works by considering sets of orderings with stronger properties than separation. We start with a lemma about complete graphs. 

\begin{lem}
\label{Loopy}
Let $G$ be the complete graph on $n$ vertices including loops. 
Then for some integer $p\leq 10 \log n$, there are linear orderings $<_1,\dots,<_p$ of $V(G)$, such that:
\begin{enumerate}[(1)]
\item every pair of disjoint edges $e,f\in E(G)$  are separated in some $<_i$, and
\item for every vertex $v\in V(G)$ and distinct vertices $u,w\in V(G)\setminus\{v\}$, for some $i\in[p]$ we have $u<_iv<_iw$ or $w<_iv<_iu$. 
\end{enumerate}
\end{lem}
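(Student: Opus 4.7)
The plan is to take $p := \lfloor 10 \log n \rfloor$ independent uniformly random linear orderings $<_1,\dots,<_p$ of $V(G)$, and to show by a union bound that both (1) and (2) hold simultaneously with positive probability. No heavier tool such as the Lov\'asz Local Lemma is needed here: each bad event has failure probability exponentially small in $p$, and there are only polynomially many bad events in $n$.

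The crucial observations concern per-ordering failure probabilities. For (1), take disjoint edges $e,f$: if both are non-loops (four distinct vertices involved), then in a uniformly random ordering the pair is separated with probability $8/24 = 1/3$; if one or both are loops, fewer vertices are involved and the separation probability is at least $2/3$. In either case the per-ordering failure probability for (1) is at most $2/3$. For (2), the restriction of a uniformly random ordering to three distinct vertices $\{u,v,w\}$ is uniform on $S_3$, so $v$ lies strictly between $u$ and $w$ with probability exactly $1/3$, again giving per-ordering failure probability $2/3$. Consequently, each bad event fails to be resolved across all $p$ orderings with probability at most $(2/3)^p$.

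To conclude I would bound the number of bad events and apply the union bound. The number of unordered pairs of disjoint edges in $G$ is at most $\binom{|E(G)|}{2} \le n^4/8$, and the number of triples $(v,\{u,w\})$ relevant for (2) is $n\binom{n-1}{2}\le n^3/2$. So the probability of some failure is at most $(n^4/8 + n^3/2)(2/3)^p$. With $p=\lfloor 10\log n\rfloor$ one has $(2/3)^p \le (3/2)\,n^{-10\ln(3/2)} \le (3/2)\,n^{-4.05}$, which together with the polynomial prefactors gives a quantity strictly less than $1$ for every $n\ge 2$ (and $n=1$ is vacuous since there are no bad events). Hence the required orderings exist.

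The only delicate step is that the constant $10$ barely beats the $n^4$ bound: since $4/\ln(3/2)\approx 9.87$, the margin in the exponent is only about $0.13\log n$, so one cannot be sloppy with constants. The prefactors $1/8$ and $1/2$ from the edge and triple counts supply exactly the slack needed, but a concrete arithmetic check is required rather than a purely asymptotic estimate. I expect this numerical check to be the only real obstacle; the rest of the argument is a textbook application of the probabilistic method.
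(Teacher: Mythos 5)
Your proof is correct and is essentially the paper's own argument: independent uniformly random orderings, per-ordering failure probability at most $2/3$ for both (1) and (2), and a plain union bound over the polynomially many bad events, with the floor in $p=\lfloor 10\log n\rfloor$ absorbed by the explicit prefactors. One pedantic note: with $|E(G)|=n(n+1)/2$ the intermediate inequality $\binom{|E(G)|}{2}\le n^4/8$ is not literally true, but the number of pairs of \emph{disjoint} edges (which is all you need) equals $n(n-1)(n^2-n+2)/8\le n^4/8$, so your numerical conclusion stands.
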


\begin{proof}
Let $p:=\floor{10\log n}$. For $i\in[p]$, let $<_i$ be a random linear ordering of $V(G)$. 

Let $e$ and $f$ be edges in $G$ with no common endpoint. 
If neither $e$ nor $f$ are loops, then the probability that $e$ and $f$ are separated in $<_i$ is $\frac{1}{3}$. 
If $e$ is a loop and $f$ is a non-loop, then the probability that $e$ and $f$ are separated in $<_i$ is $\frac{2}{3}$. 
If both $e$ and $f$ are loops, then they are always separated in $<_i$. 
Thus the probability that $e$ and $f$ are not separated in $<_i$ is at most $\frac{2}{3}$. 
Hence the probability that (1) fails for $e$ and $f$ is at most $(\frac{2}{3})^p$. 

Now consider a vertex $v\in V(G)$ and distinct vertices $u,w\in V(G)\setminus\{v\}$. For each $i\in[p]$ the probability that  $u<_iv<_iw$ or $w<_iv<_iu$ is $\frac{1}{3}$. Hence the probability (2)  fails for every $i\in[p]$ is at most $(\frac{2}{3})^p$.

%\comment{AS: p4 Para 4.  We could (I think) improve on the union bound by using the Local Lemma again; but it would only gain a constant factor, so probably isn't worth it.}

By the union bound, the probability that both (1) and (2) fail is at most 
$\binom{|E(G)|}{2} (\frac{2}{3})^p + n \binom{n-2}{2} (\frac{2}{3})^p 
= ( \binom{n(n+1)/2}{2} + n \binom{n-2}{2}) (\frac{2}{3})^p 
< n^4 (\frac{2}{3})^p<1$. 
Thus there exists linear orderings $<_1,\dots,<_p$ such that (1) and (2) hold. 
\end{proof}

Note that we need $\Omega(\log n)$ orderings in \cref{Loopy} because of (2): if $p<\log_2(n-1) - 1$ then for any vertex $v$ and any set of $p$ orderings, there are distinct vertices $x,y$ are on the same side of $v$ in each of the orderings. 

%\smallskip
The following definition is a key to the proof of \cref{MaxDegree}. A representation $<_1,\dots,<_p$ of a graph $G$ is \emph{strongly separating} if:
\begin{enumerate}[(a)]
\item for all disjoint edges $vw,xy\in E(G)$, for some ordering $<_i$, we have $v,w<_ix,y$ or $x,y<_iv,w$, and
\item for every edge $vw\in E(G)$ and vertex $x \in V(G)\setminus\{v,w\}$, 
we have $x <_i v,w$ and $v,w <_j x$ for some $i,j\in[p]$.
\end{enumerate}
We define the \emph{strong separation dimension} of a graph $G$ to be the minimum number of linear orderings in a strongly separating representation of $G$.  Clearly the  separation dimension of a graph is at most its strong separation dimension, and it will be helpful to work with the latter.

\begin{lem}
\label{WeakStrong}
Every graph $G$ with maximum degree $\Delta$ has strong separation dimension at most the separation dimension of $G$ plus $2\Delta+2$. 
\end{lem}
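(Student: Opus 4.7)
The plan is to take any separating representation $\mathcal{R}$ of $G$ of size equal to the separation dimension, and adjoin at most $2\Delta+2$ new orderings that together enforce condition~(b) of strong separation. Condition~(a) is literally the definition of separation for disjoint edges, so it is inherited from $\mathcal{R}$; the only task is to arrange, for each edge $vw\in E(G)$ and each third vertex $x\in V(G)\setminus\{v,w\}$, one new ordering with $x$ entirely below $\{v,w\}$ and another with $x$ entirely above.

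The key idea is to use Vizing's theorem to partition $E(G)$ into $\Delta+1$ matchings $M_1,\dots,M_{\Delta+1}$, and handle each matching by a pair of orderings. For a matching $M$, pick any ordering $\pi_M$ of $V(G)$ in which the two endpoints of every edge of $M$ occupy consecutive positions; such an ordering exists because the edges of $M$ are vertex-disjoint, so one may list the matched pairs one after another and then append the remaining vertices in any order. Then for each $vw\in M$ and each $x\in V(G)\setminus\{v,w\}$, the vertex $x$ lies either entirely before or entirely after the consecutive block $\{v,w\}$ in $\pi_M$. Reversing $\pi_M$ keeps every edge of $M$ consecutive while flipping each such $x$ to the opposite side, so the pair $\pi_M,\pi_M^{\mathrm{rev}}$ witnesses both directions of~(b) for every edge of~$M$.

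Adjoining the $2(\Delta+1)$ orderings $\pi_{M_i},\pi_{M_i}^{\mathrm{rev}}$ (for $i\in[\Delta+1]$) to $\mathcal{R}$ therefore produces a strongly separating representation of size at most (separation dimension of $G$) $+\,2\Delta+2$: every edge $vw\in E(G)$ lies in some $M_i$ and is thus covered by the pair $\pi_{M_i},\pi_{M_i}^{\mathrm{rev}}$, while the original $\mathcal{R}$ handles~(a). I do not foresee a serious obstacle here: Vizing's theorem does the combinatorial decomposition, and the consecutive-block trick together with reversal supplies both directions of~(b) automatically. The mild sanity check is that reversing $\pi_M$ preserves all relevant properties, which is immediate because we only used consecutivity of the matched pairs and never any global feature of $\pi_M$.
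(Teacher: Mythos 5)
Your proof is correct and is essentially identical to the paper's own argument: both use Vizing's theorem to partition $E(G)$ into $\Delta+1$ matchings and, for each matching, add an ordering placing the matched pairs consecutively together with its reversal, so that condition~(b) is witnessed in both directions while condition~(a) is inherited from the original separating representation. No changes needed.
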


\begin{proof}
Say $G$ has separation dimension $d$. 
By Vizing's Theorem, there is a partition $E_1,\dots,E_{\Delta+1}$ of $E(G)$ into matchings. 
Starting from a separating representation of $G$ in $d$ dimensions, we now add two orderings $<_i$ and $<'_i$ for each $i\in[\Delta+1]$. 
Say $E_i=\{v_1w_1,\dots,v_nw_n\}$. Let $<_i$ be $v_1,w_1,\dots,v_nw_n$ followed by $V(G)\setminus\{v_1,w_1,\dots,v_n,w_n\}$ in any ordering. Let $<'_i$ be the reverse of $<'_i$. 
Every edge $vw$ of $G$ is in some $E_i$. Since $v$ and $w$ are consecutive in $<_i$, for each vertex $x\in V(G)\setminus\{v,w\}$, we have 
$v,w <_i x$ and $x <'_i v,w$, or $v,w <'_i x$ and $x <_i v,w$.
Hence we have a strongly separating representation of $G$ with $d+2\Delta+2$ orderings in total.  
\end{proof}

\begin{lem}
\label{Components}
Let $G_1,\dots,G_k$ be the connected components of a graph $G$. 
For $a\in[k]$, let $p_a$ be the strong separation dimension of $G_a$. 
Then $G$ has strong separation dimension at most $\max\{p_1,\dots,p_k,2\}$. 
Moreover, there is such a representation such that in each ordering, 
$V(G_1) < V(G_2) < \dots < V(G_k)$ or $V(G_k) < V(G_{k-1}) < \dots < V(G_1)$.
\end{lem}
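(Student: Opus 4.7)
The plan is to build a strongly separating representation of $G$ by pasting together strongly separating representations of the components in a controlled order of concatenation. Set $p := \max\{p_1,\dots,p_k,2\}$. For each component $G_a$, take a strongly separating representation $<^a_1,\dots,<^a_{p_a}$ of $V(G_a)$, and if $p_a<p$ extend it by choosing arbitrary linear orderings $<^a_{p_a+1},\dots,<^a_p$ of $V(G_a)$. Adding more orderings cannot destroy (a) or (b), so each extended list is still a strongly separating representation of $G_a$.

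Next I would define $p$ linear orderings of $V(G)$ by specifying the order on components and the internal order on each component. For odd $i\in[p]$, take $V(G_1)<_i V(G_2)<_i\dots<_i V(G_k)$; for even $i\in[p]$, take the reverse $V(G_k)<_i V(G_{k-1})<_i\dots<_i V(G_1)$. In both cases, restrict $<_i$ to $V(G_a)$ to equal $<^a_i$. Since $p\ge 2$, both orientations of the components appear at least once, so the \emph{moreover} clause is automatic.

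It then remains to verify (a) and (b) for $<_1,\dots,<_p$. For (a), if two disjoint edges lie in the same component $G_a$ they are separated in some $<^a_i$ with $i\in[p_a]$, hence in $<_i$. If they lie in distinct components $G_a,G_b$, they are separated in every $<_i$ because all of $V(G_a)$ lies on one side of all of $V(G_b)$. For (b), given $vw\in E(G_a)$ and $x\in V(G)\setminus\{v,w\}$: if $x\in V(G_a)$, the strong separation of $G_a$ supplies indices $i,j\in[p_a]$ with $x<^a_i v,w$ and $v,w<^a_j x$, which transfer verbatim to $<_i$ and $<_j$; if $x\in V(G_b)$ with $b\ne a$, then one of $<_1$ and $<_2$ places $x$ before $\{v,w\}$ and the other places $\{v,w\}$ before $x$.

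The only genuine point to watch is the cross-component case of (b), which forces us to include both a forward and a reverse arrangement of the components, and hence the $2$ in the bound $\max\{p_1,\dots,p_k,2\}$. Everything else is a matter of recording that the component representations are never damaged by padding or by being juxtaposed with disjoint vertex sets, so I do not expect any substantive obstacle.
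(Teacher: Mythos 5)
Your proposal is correct and follows essentially the same route as the paper: concatenate strongly separating representations of the components (padded to a common length $p\ge 2$), using both a forward and a reverse arrangement of the components so that the cross-component case of condition (b) is covered. The only difference is cosmetic --- you alternate the component order between odd and even indices, whereas the paper reverses only the final ordering --- and your verification of (a) and (b) matches the paper's.
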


\begin{proof}
Let $p:=\max\{p_1,\dots,p_k,2\}$.
For $a\in[k]$, let $\{<^a_1,\dots,<^a_p\}$ be a strongly separating representation of $G_a$. 
%For $j\in[p-1]$, let $<_j$ be the ordering $<^1_j,\dots,<^k_j$ of $V(G)$. 
For $j\in[p-1]$, let $<_j$ be the ordering of $V(G)$ with $V(G_1)<_j\dots<_j V(G_k)$, where $V(G_a)$ is internally ordered according to $<^a_j$, for $a\in[k]$. 
Finally, let $<_p$ be the ordering of $V(G)$ with $V(G_k)<_p\dots<_p V(G_1)$, where $V(G_a)$ is internally ordered according to $<^a_p$, for $a\in[k]$. 
%Let $<_p$ be the ordering $<^k_p,\dots,<^1_p$ of $V(G)$. 
Thus $\{<_1,\dots,<_p\}$ is a representation of $G$, which we now show is strongly separating. 
Consider disjoint edges $vw,xy\in E(G)$. 
If $vw$ and $xy$ are in the same component, then (a) holds by assumption. 
Otherwise, $vw$ and $xy$ are in distinct components, implying that $v,w<_1 x,y$ or $x,y <_1 v,w$, 
and again (a) holds. 
Now consider an edge $vw\in E(G)$ and vertex $x \in V(G)\setminus\{v,w\}$.
If $vw$ and $x$ are in the same component, then (b) holds by assumption. 
So we may assume that $vw\in E(G_a)$ and $x\in V(G_b)$ for distinct $a,b\in[k]$. 
If $a<b$ then $v,w <_1 x$ and $x <_p v,w$. 
If $b<a$ then $v,w <_p x$ and $x <_1 v,w$. 
Thus (b) holds, and $\{<_1,\dots,<_p\}$ is strongly separating. 
\end{proof}

Note that every connected graph with at least three vertices has strong separation dimension at least 2, so \cref{Components} implies that for every graph $G$ with at least three vertices in some component, the strong separation dimension of $G$ equals the maximum strong separation dimension of the components of $G$. 

For a graph $G$ and disjoint sets $A,B\subseteq V(G)$, let $G[A,B]$ be the bipartite subgraph of $G$ with vertex set $A\cup B$ and edge set $\{vw\in E(G): v\in A, w\in B\}$.
%, called the  bipartite subgraph \emph{induced} by $A,B$. 

\begin{lem}
\label{partition}
Fix integers $s,t,k\geq 2$, where $k$ is even. 
Let $G$ be a graph, and let $V_1,\dots,V_k$ be a partition of $V(G)$, 
such that $G[V_i]$ has strong separation dimension at most $s$ for each $i\in[k]$, 
and $G[V_i,V_j]$ has strong separation dimension at most $t$ for all distinct $i,j\in[k]$. 
Then $G$ has strong separation dimension at most $2s+(k-1)t+20\log k$. 
\end{lem}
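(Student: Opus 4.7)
The plan is to construct a strongly separating representation of $G$ by combining three families of orderings whose sizes match the three summands $2s$, $(k-1)t$, and $20\log k$. Family~A will consist of $2s$ \emph{block orderings}: for each $r \in [s]$, take the ordering in which the parts appear as $V_1 < V_2 < \dots < V_k$ with $V_i$ internally sorted by the $r$th ordering $\prec^i_r$ of a strongly separating representation of $G[V_i]$, and also take its reverse. Family~B will consist of $(k-1)t$ \emph{matching orderings} built from a $1$-factorization $M_1,\dots,M_{k-1}$ of $K_k$ (which exists because $k$ is even): for each $\ell\in[k-1]$ and $r\in[t]$, form an ordering in which each matched pair $\{i,j\}\in M_\ell$ becomes a contiguous super-block $V_i\cup V_j$ internally sorted by the $r$th ordering $\prec^{ij}_r$ of a strongly separating representation of $G[V_i,V_j]$, with the super-blocks placed in an arbitrary but fixed order. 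Family~C will consist of at most $20\log k$ \emph{Loopy orderings}: apply \cref{Loopy} to $K_k$ with loops to obtain orderings $<^\circ_1,\dots,<^\circ_p$ of $[k]$ with $p\le 10\log k$, lift each to an ordering of $V(G)$ by using it as the macro block order with any fixed internal ordering of each part, and include each such ordering and its reverse.

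To verify strong separation, I assign to each edge $e$ the set $P(e)\subseteq[k]$ of parts that meet $e$, so that within-part edges correspond to loops of $K_k$-with-loops and bipartite edges to non-loops. For condition~(a) on disjoint $e,f$, I split on $|P(e)\cap P(f)|$: the case $0$ is handled by \cref{Loopy}(1) applied to $P(e)$ and $P(f)$ as vertex-disjoint ``edges'' in $K_k$-with-loops, lifted through Family~C; the case $P(e)=P(f)$ is handled by the strong separation~(a) of $G[V_i]$ via Family~A or of $G[V_i,V_j]$ via Family~B using the matching containing $\{i,j\}$; and the case $|P(e)\cap P(f)|=1$ is broken into sub-cases according to whether each of $e,f$ is within-part or bipartite. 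For condition~(b) on an edge $vw$ with $P(vw)=\{a,b\}$ and a vertex $x\in V_c$: if $c\in\{a,b\}$ then the strong separation~(b) of $G[V_a]$ or $G[V_a,V_b]$ lifts directly through Family~A or~B; if $c\notin\{a,b\}$ then both directions of $x$ versus $vw$ come either from Family~A's $+/-$ pair (when $vw$ is within a part) or from Family~C by applying \cref{Loopy}(1) to $P(vw)$ and the loop $\{c\}$, which are vertex-disjoint in $K_k$-with-loops, with reversal supplying both directions.

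The subtlest step is condition~(a) when $e$ and $f$ are both bipartite and share exactly one part, say $e\in G[V_i,V_b]$ and $f\in G[V_i,V_d]$ with $b\neq d$. Here the $1$-factorization is what saves the day: the matching $M_\ell$ containing $\{i,b\}$ produces a Family~B ordering in which $V_i\cup V_b$ is a single super-block containing both endpoints of $e$ and the $V_i$-endpoint of $f$, while the $V_d$-endpoint of $f$ lies in a different super-block and hence is globally either entirely before or entirely after the super-block $V_i\cup V_b$. By strong separation~(b) of $G[V_i,V_b]$ applied to the edge $e$ and the $V_i$-endpoint of $f$, there exists an $r\in[t]$ such that $\prec^{ib}_r$ places both endpoints of $e$ on the side of the $V_i$-endpoint of $f$ that matches the fixed super-block order, yielding the required separation of $e$ and $f$. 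After this case the remaining bookkeeping for both (a) and (b) is routine, and the total count is $2s+(k-1)t+2p\le 2s+(k-1)t+20\log k$.
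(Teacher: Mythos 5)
Your construction has the same skeleton as the paper's proof---the same three families with the same counts $2s$, $(k-1)t$ and $20\log k$, built from a $1$-factorization of $K_k$ and \cref{Loopy}---but your verification of the hardest case runs along a genuinely different line, and it is correct. For two bipartite edges sharing exactly one part, $e\in G[V_i,V_b]$ and $f\in G[V_i,V_d]$, the paper invokes the betweenness property (2) of \cref{Loopy} to get an ordering of $[k]$ with $d<_r i<_r b$ or $b<_r i<_r d$, and then uses the pair $<_r^+,<_r^-$ (same block order, opposite internal orders) to finish; you instead stay inside the Family~B orderings for the matching containing $\{i,b\}$ and use property (b) of the strongly separating representation of $G[V_i,V_b]$ to push both endpoints of $e$ to the correct side of $f$'s $V_i$-endpoint, while $f$'s $V_d$-endpoint sits in a different super-block whose position is fixed across all $t$ orderings. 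This is a genuine simplification: you never use \cref{Loopy}(2), only (1), so in principle your Family~C could be made smaller than $20\log k$. The trade-off is that your ``reverses'' in Families A and C are full reverses, whereas the paper's extra orderings reverse the block order but preserve each $V_i$'s internal order---a distinction the paper flags explicitly because its treatment of the mixed case (a within-part edge $xy\subseteq V_i$ against a bipartite edge $vw$ with $v\in V_i$, $w\in V_j$) depends on it. In your setup that mixed case is the one piece of ``routine bookkeeping'' that deserves a sentence: it is \emph{not} handled by your reverses (a full reverse also flips the internal order of $V_i$ and fails exactly where the paper's block-only reverse succeeds), but it is handled by the same device as your detailed case---property (b) of the representation of $G[V_i]$ lets you pick the forward Family~A ordering in which $x,y$ lie on the side of $v$ matching the fixed position of $V_j$ relative to $V_i$. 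With that sentence added, the argument is complete and the count $2s+(k-1)t+2p\le 2s+(k-1)t+20\log k$ is as required.
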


\begin{proof}
Let $G_0:=\bigcup_{i=1}^k G[V_i]$. Let $H$ be the complete graph with vertex set  $[k]$. Let $E_1,\dots,E_{k-1}$ be a partition of $E(H)$ into perfect matchings, which exists since $k$ is even. For $i\in[k-1]$, let $G_i :=\bigcup_{ab\in E_i} G[V_a,V_b]$. Note that $V(G_i)=V(G)$ for $i\in[0,k-1]$, and that $G=G_0\cup G_1 \cup \dots\cup G_{k-1}$. 

Since $s,t\geq 2$, by \cref{Components}, $G_0$ has strong separation dimension at most $s$, and $G_i$ has strong separation dimension at most $t$ for each $i\in[k-1]$. This gives $s+(k-1)t$ orderings of $V(G)$. Moreover, by \cref{Components}, for each of the $s$ orderings of $G_0$, we have $V_1 < \dots < V_k$ or $V_k  < \dots < V_1$. For each such ordering of $G_0$ of the form $V_1 < \dots < V_k$, add the \emph{extra} ordering $V_k < \dots < V_1$ to the representation of $G$. And for each such ordering of $G_0$ of the form $V_k  < \dots < V_1$, add the \emph{extra}  ordering $V_1 < \dots < V_k$ to the representation of $G$. In these extra orderings, each set $V_i$ inherits its ordering from the original. (So the extra ordering is not simply the reverse of the original.)\ This gives $2s+(k-1)t$ orderings of $V(G)$. 

For each $i\in[k]$, let $\overrightarrow{V_i}$ be an arbitrary linear ordering of $V_i$. 
Let $\overleftarrow{V_i}$ be the reverse ordering. 
Let $H^+$ be the complete graph on vertex set $[k]$ including loops. 
By \cref{Loopy}, for some $p\leq 10\log k$, there is a representation $\{<_1,\dots,<_p\}$ of $H^+$ such that:

\begin{enumerate}[(1)]
\item each pair of disjoint edges $e,f\in E(H^+)$  are separated in some $<_i$, and
\item  for every vertex $v\in V(H^+)$ and for all distinct vertices $u,w\in V(H^+)\setminus\{v\}$, for some $i\in[p]$ we have $u<_iv<_iw$ or $w<_iv<_iu$. 
\end{enumerate}

For each $i\in[p]$, introduce two orderings $<_i^+$ and $<_i^-$ of $V(G)$ constructed from $<_i$: in the first replace each vertex $i\in V(H^+)$ by $\overrightarrow{V_i}$, and in the second replace each vertex $i\in V(H^+)$ by $\overleftarrow{V_i}$. Together with the previous orderings, this gives a total of at most $2s+(k-1)t+20\log k$ orderings of $V(G)$. 

We now check that each pair of disjoint edges $vw$ and $xy$ in $G$ are separated in some ordering. 
Say $v\in V_i$, $w\in V_j$, $x\in V_a$ and $y\in V_b$. 

If $i=j$ and $a=b$, then $vw$ and $xy$ are both in $G_0$, and are thus separated in some ordering arising from $G_0$. So we may assume that $i \neq j$ or $a\neq b$. Without loss of generality, $i\neq j$. 

If $\{i,j\}=\{a,b\}$ then $ij\in E_\ell$ for some $\ell\in[k-1]$, implying $vw$ and $xy$ are both in $G_\ell$, and are thus separated in some ordering arising from $G_\ell$. So we may assume that $\{i,j\}\neq \{a,b\}$. Thus $ij$ and $ab$ are distinct edges of $H^+$, where $ab$ is possibly a loop. 

If $\{i,j\}\cap \{a,b\}=\emptyset $ then $ij$ and $ab$ are separated in some ordering $<_h$  arising from $H^+$, implying that $vw$ and $xy$ are also separated  (in both $<_h^+$ and $<_h^-$). So we may assume that $\{i,j\}\cap \{a,b\}\neq \emptyset$. Without loss of generality, $i=a$.

First suppose that $a=b$ ($=i$). Then $xy\in E(G_0)$ and $v\in V(G_0)$. Thus for some ordering $<_\alpha$ of $G_0$, we have $v <_\alpha x,y$. By construction, $V_j <_\alpha V_i$ or $V_i <_\alpha V_j$. If $V_j <_\alpha V_i$ then $w<_\alpha v<_\alpha  x,y$. Otherwise, $V_i <_\alpha V_j$. Then in the extra ordering associated with $<_\alpha$, we have $w<v<x,y$. In both cases, $vw$ and $xy$ are separated. 

So we may assume that $a\neq b$. Thus $j\neq b$, as otherwise $\{i,j\}=\{a,b\}$. By property (2) above, for some $r\in[p]$ we have $j<_r i<_r b$ or $b<_r i<_r j$. Without loss of generality, $j<_r i<_r b$. Since $v<x$ in $\overrightarrow{V_i}$ or in $\overleftarrow{V_i}$, in one of $<_r^+$ and $<_r^-$, we have $w < v < x < y$, implying $vw$ and $xy$ are separated. 

It remains to show that for every edge $vw\in E(G)$ and vertex $x \in V(G)\setminus\{v,w\}$, 
we have $x < v,w$  in some ordering and $v,w < x$ in another ordering. Since $vw\in E(G_i)$ for some $i\in[0,k-1]$, and $x\in V(G_i)$, this property holds by assumption. 
\end{proof}

We now prove \cref{MaxDegree}, which says that every graph with maximum degree $\Delta$ has separation dimension less than $20\Delta$. Recall that \citet{CMS11} proved the upper bound of $2\Delta (\ceil{\log_2 \log_2 \Delta} + 3) + 1$, which is less than $20\Delta$ if $\Delta \leq 2^{17}$. So it suffices to assume that $\Delta\geq 2^{17}$. In this case, to enable an inductive proof, we prove the following strengthening. 

\begin{lem}
\label{MaxDegreeProof}
For $\Delta\geq 2^{17}$, every graph with maximum degree at most $\Delta$ has strong separation dimension at most $20 \Delta(1 - \Delta^{-1/5} )$. 
\end{lem}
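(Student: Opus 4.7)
The plan is to prove \cref{MaxDegreeProof} by strong induction on $\Delta$, partitioning $V(G)$ via \cref{Colour} and combining the resulting subgraphs using \cref{partition}. Fix $\Delta\ge 2^{17}$ and assume the lemma for every smaller maximum degree. I would choose an even integer $k$ of order $\Delta^{1/5}$ and apply \cref{Colour} to obtain a partition $V_1,\dots,V_k$ of $V(G)$ in which every vertex has fewer than
\[
d := \frac{\Delta}{k}+\sqrt{\frac{3\Delta\log(4k\Delta^2)}{k}}
\]
neighbours in each part. Consequently each induced subgraph $G[V_i]$ and each bipartite subgraph $G[V_i,V_j]$ has maximum degree strictly less than $d$. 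When $d\ge 2^{17}$ the induction hypothesis bounds the strong separation dimension of each of these subgraphs by $20d(1-d^{-1/5})$; when $d$ falls below $2^{17}$, the same bound follows (with room to spare) from the \citet{CMS11} estimate together with \cref{WeakStrong}. Substituting $s=t=20d(1-d^{-1/5})$ into \cref{partition} would then give strong separation dimension of $G$ at most
\[
(k+1)\cdot 20d\bigl(1-d^{-1/5}\bigr)+20\log k.
\]

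The heart of the argument is to verify that this quantity is at most $20\Delta(1-\Delta^{-1/5})$. Rearranging, the required inequality is
\[
(k+1)\,d^{4/5}-\Delta^{4/5}\;\ge\;(k+1)\,d-\Delta+\log k.
\]
With $d\approx \Delta/k$, the dominant part of the right-hand side is $\Delta/k$ plus a square-root-order error from \cref{Colour}, while the left-hand side behaves like $(k^{1/5}-1)\Delta^{4/5}$. Taking $k$ of order $\Delta^{1/5}$ puts both sides at order $\Delta^{4/5}$, and the growing factor $k^{1/5}-1$ provides enough slack to absorb all the error terms once $\Delta$ is sufficiently large.

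The main obstacle will be the numerical bookkeeping: pinning down an explicit even integer $k$ as a function of $\Delta$ so that the inequality above holds with the precise constant $20$ for every $\Delta\ge 2^{17}$, while simultaneously controlling the $\sqrt{3\Delta\log(4k\Delta^2)/k}$ error in $d$, the $20\log k$ overhead from \cref{partition}, and the edge case in which $d<2^{17}$. The small residual range of $\Delta$ just above $2^{17}$, where the subgraphs are too sparse for the induction hypothesis to apply to them directly, will likely need to be handled by reverting to \citet{CMS11} together with \cref{WeakStrong}, both of which are comfortably sufficient there. A secondary subtlety is that the bound for $G[V_i,V_j]$ is being obtained by ignoring bipartiteness and appealing to the inductive hypothesis for graphs of maximum degree $<d$; one should check that strengthening this (for example by exploiting Kőnig's theorem) is not needed for the arithmetic to close, and the calculation above suggests that it indeed is not, provided $k$ grows as $\Delta^{1/5}$.
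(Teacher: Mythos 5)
Your skeleton is exactly the paper's: induct on $\Delta$, partition via \cref{Colour}, combine via \cref{partition} with $s=t$, and fall back on \citet{CMS11} plus \cref{WeakStrong} where the induction hypothesis is unavailable. The genuine gap is the claim that the inductive inequality closes for all $\Delta\ge 2^{17}$ with only a ``small residual range just above $2^{17}$'' needing separate treatment. By your own asymptotic accounting you need $(k^{1/5}-1)\Delta^{4/5}\gtrsim \Delta/k$, which with $k\sim\Delta^{1/5}$ amounts to $\Delta^{1/25}\ge 2$, i.e.\ $\Delta\ge 2^{25}$, \emph{before} the Colour error and the $20\log k$ term are charged; numerically the crossover is nearer $2^{28}$--$2^{30}$. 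Concretely, at $\Delta=2^{17}$ with $k=10$ one has $(k+1)d\ge 1.1\Delta$ while $1-d^{-1/5}\approx 0.850$ and $1-\Delta^{-1/5}\approx 0.905$, so $(k+1)\cdot 20d(1-d^{-1/5})>20\Delta(1-\Delta^{-1/5})$ already with the error terms ignored. Your proposed patch for that range --- invoking \citet{CMS11} for the \emph{subgraphs} when $d<2^{17}$ --- does not repair this, because the failure is in the combining inequality \eqref{remains}-style step, not in the availability of the hypothesis for the parts.

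The fix is the one the paper uses: make the base case a wide interval and apply \citet{CMS11} plus \cref{WeakStrong} to $G$ \emph{itself} there. For $2^{17}\le\Delta\le 2^{32}$ one has $\lceil\log_2\log_2\Delta\rceil=5$, so the strong separation dimension is at most $18\Delta+3\le 20\Delta(1-\Delta^{-1/5})$, the last inequality holding because $\Delta^{-1/5}\le 2^{-3.4}<0.095$ for $\Delta\ge 2^{17}$. The inductive step is then run only for $\Delta>2^{32}$; the paper takes $k$ to be the largest even integer at most $\Delta^{1/4}$, which keeps $d\ge\Delta^{3/4}\ge 2^{24}$ so the hypothesis always applies to the parts, and your $k\sim\Delta^{1/5}$ would also close in that range. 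Once the base case is widened in this way (and the crossover verified explicitly rather than asserted), your plan goes through.
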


\begin{proof}
We proceed by induction on $\Delta$. In the base case, suppose that $2^{17} \leq \Delta\leq 2^{32}$. Let $G$ be a graph with maximum degree $\Delta$. By \cref{WeakStrong} and the result of \citet{CMS11} mentioned above, the strong separation dimension of $G$ is at most 
\begin{equation*}
2\Delta (\ceil{\log_2 \log_2 \Delta} + 4) + 3  
=18\Delta + 3  
\leq 20\Delta (1 - \Delta^{-1/5} ) .
\end{equation*}
%\begin{equation*}
%2\Delta (\ceil{\log_2 \log_2 \Delta} + 4) + 3 
%= \begin{cases}
%16\Delta + 3  \leq 22\Delta (1 - \Delta^{-1/5} ) & \text{ if } 2^8 < \Delta \leq 2^{16}\\
%18\Delta + 3  \leq 22\Delta (1 - \Delta^{-1/5} ) & \text{ if } 2^{16} < \Delta \leq 2^{32}\\
%20\Delta + 3  \leq 22\Delta (1 - \Delta^{-1/5} ) & \text{ if } 2^{32} < \Delta \leq 2^{32}.
%\end{cases}
%\end{equation*}
%To see that this upper bound is sufficient, note that:\\
%%if $3\leq  \Delta \leq 4$ then \eqref{easy} is $10\Delta + 3  \leq 22\Delta (1 - \Delta^{-1/5} )$; \\
%%if $4< \Delta \leq 2^4$ then \eqref{easy} is  $12\Delta + 3  \leq 22\Delta (1 - \Delta^{-1/5} )$; \\
%%if $2^4 < \Delta \leq 2^8$ then \eqref{easy} is  $14\Delta + 3  \leq 22\Delta (1 - \Delta^{-1/5} )$; \\
%if $2^8 < \Delta \leq 2^{16}$ then \eqref{easy} is  $16\Delta + 3  \leq 22\Delta (1 - \Delta^{-1/5} )$; \\
%if $2^{16} < \Delta \leq 2^{32}$ then \eqref{easy} is  $18\Delta + 3  \leq 22\Delta (1 - \Delta^{-1/5} )$. \\
%if $2^{32} < \Delta \leq 2^{64}$ then \eqref{easy} is  $20\Delta + 3  \leq 22\Delta (1 - \Delta^{-1/5} )$. 

So we may assume that $\Delta> 2^{32}$. 
Let $G$ be a graph with maximum degree $\Delta$. 
Let $k$ be the largest even integer at most $\Delta^{1/4}$. 
Let
\begin{equation*}
d:=  ( 1 + k^{-1} ) \,\frac{\Delta}{k}.
\end{equation*}
By \cref{Colour}, there is a partition $V_1,\dots,V_k$ of $V(G)$ such that  for every vertex $v\in V(G)$ and integer $i\in[k]$,
\begin{equation*}
|N_G(v)\cap V_i| < \frac{\Delta}{k} + \sqrt{\frac{3\Delta\log(4k \Delta^2)}{k} } < d,
\end{equation*}
where the final inequality holds since $k\leq \Delta^{1/4}$ and  $\Delta> 2^{32}$. 
Thus $G[V_i]$ and $G[V_i,V_j]$ have maximum degree at most $d$ for all distinct $i,j\in[k]$. 

Now $d\geq \frac{\Delta}{k} \geq \Delta^{3/4} \geq 2^{24}$ and $d<\Delta$. By induction, $G[V_i]$ and $G[V_i,V_j]$ both have strong separation dimension at most $20d ( 1 - d^{-1/5} )$ for all distinct $i,j\in[k]$. Since 
$20d ( 1 - d^{-1/5} ) \geq 2$, by \cref{partition}, $G$ has strong separation dimension at most 
$20 (k+1) d (1 - d^{-1/5} ) + 20 \log k$, which is at most 
$20 (k+2) d (1 - d^{-1/5} )$. 
All that remains is to prove that
\begin{equation}
\label{remains}
(k+2) d (1 - d^{-1/5} ) \leq \Delta (1 - \Delta^{-1/5} ).
\end{equation}
Suppose for the sake of contradiction that \eqref{remains} does not hold. 
Substituting for $d$ and since $k+4 \geq (k+2)(1+ k^{-1} )$, 
\begin{align*}
(k+4) \,\frac{\Delta}{k}\, (1 - d^{-1/5} ) \geq (k+2)  ( 1 + k^{-1} ) \, \frac{\Delta}{k}\, (1 - d^{-1/5} ) > \Delta ( 1 - \Delta^{-1/5}) .
\end{align*}
Thus
\begin{align*}
(1+4k^{-1} )  (1 - d^{-1/5} ) >   1 - \Delta^{-1/5} .
\end{align*}
Hence
\begin{align*}
4k^{-1}   +  \Delta^{-1/5} > (1+ 4k^{-1} )  d^{-1/5} > d^{-1/5}.
\end{align*}
Since $k\geq \frac45 \Delta^{1/4}$ and $d< \frac32 \Delta^{3/4}$, 
\begin{align*}
5\Delta^{-1/4}  + \Delta^{-1/5}    >  \left( \frac32 \Delta^{3/4} \right) ^{-1/5}  ,
\end{align*}
which is a contradiction since $\Delta > 2^{32}$. Hence \eqref{remains} holds, which completes the proof. 
\end{proof}

%%%%%%%%%%%%%%
\section{Proof of \cref{AvgDeg}}
\label{ProofAvgDeg}

This section shows that graphs with separation dimension 3 have bounded average degree. Much of the proof works in any dimension, so we present it in general. We include proofs of the following two folklore lemmas for completeness. 

\begin{lem}
\label{DropMinDegree}
Every graph with average degree at least $2d$ contains a subgraph with minimum degree at least $d$. 
\end{lem}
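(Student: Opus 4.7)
The plan is to prove this by a simple iterative vertex-deletion argument, maintaining a quantitative invariant on the ratio of edges to vertices. Specifically, I will show that one can repeatedly delete low-degree vertices while preserving the condition that the number of edges is at least $d$ times the number of vertices, and that this process must terminate in a nonempty subgraph of minimum degree at least $d$.

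First I will rephrase the hypothesis: if the average degree of $G$ is at least $2d$, then $2|E(G)| \geq 2d\,|V(G)|$, i.e. $|E(G)| \geq d\,|V(G)|$. I will then define a procedure: as long as the current graph $H$ contains a vertex $v$ of degree (in $H$) strictly less than $d$, delete $v$ and continue. The key invariant to track is $|E(H)| \geq d\,|V(H)|$. At each deletion step, $|V(H)|$ drops by $1$ and $|E(H)|$ drops by at most $d-1$, so the new graph $H'$ satisfies
\begin{equation*}
|E(H')| \geq |E(H)| - (d-1) \geq d\,|V(H)| - (d-1) > d\,(|V(H)|-1) = d\,|V(H')|,
\end{equation*}
preserving the invariant.

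Finally, I will argue termination and nonemptiness. Since each step removes a vertex, the process must halt after finitely many steps. It cannot halt with $|V(H)|=0$, since then the invariant would have been violated at the last step (the vertex removed would have been the unique vertex, of degree $0 < d$, on a graph with $|E|\geq d$, which is impossible). Thus the process ends with a nonempty subgraph in which every vertex has degree at least $d$, as required. I do not foresee any serious obstacle: the only subtle point is making sure the invariant argument genuinely prevents the process from eating the whole graph, which is handled by the strict inequality in the invariant update.
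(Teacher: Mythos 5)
Your proof is correct and is essentially the paper's own argument: repeatedly delete vertices of degree less than $d$, observing that this preserves the condition $|E(H)|\geq d\,|V(H)|$ (equivalently, average degree at least $2d$), so the process terminates in a nonempty subgraph of minimum degree at least $d$. You simply make the invariant and the termination/nonemptiness check more explicit than the paper does.
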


\begin{proof}
Deleting a vertex of degree less than $d$ maintains the property that the average degree is at least $2d$. 
Thus, repeatedly deleting vertices of degree less than $d$ produces a subgraph with average degree at least $2d$ and minimum degree at least $d$. 
%  m' = m - deg >= m - d
% n' = n-1
% avg deg 2m'/n' >= 2m/n
% <==   2(m-d)/(n-1) >= 2m/n
% <==   2(m-d)n  >= 2m(n-1)
% <==   2mn-2dn  >= 2mn-2m
% <==   2m >= 2dn
% <==   2m/n >= 2d true
\end{proof}

\begin{lem}
\label{DropBipartite}
Every graph with minimum degree at least $2d$ contains a bipartite spanning subgraph with minimum degree at least $d$. 
\end{lem}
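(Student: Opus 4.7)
The plan is to use the standard max-cut argument. I would consider a bipartition $(A,B)$ of $V(G)$ that maximises the number of edges with one endpoint in $A$ and the other in $B$; such a bipartition exists since $V(G)$ is finite. Let $H$ be the bipartite spanning subgraph of $G$ whose edges are exactly these cross edges.

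The key step is to show that every vertex $v$ satisfies $\deg_H(v) \geq \frac{1}{2}\deg_G(v)$. Suppose not: then $v$ has strictly more neighbours on its own side of the bipartition than on the opposite side. Moving $v$ to the other part would therefore increase the number of cross edges, contradicting the maximality of the bipartition. Hence every vertex has at least half its $G$-neighbours across the cut, and so
\begin{equation*}
\deg_H(v) \geq \tfrac{1}{2}\deg_G(v) \geq d
\end{equation*}
for all $v \in V(G)$, giving the required minimum degree.

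I do not expect any real obstacle; the argument is a one-line swap, and the main thing to be careful about is phrasing the maximality contradiction cleanly (strict inequality on $v$'s own side gives a strict increase in the cut). No probabilistic or inductive machinery is needed.
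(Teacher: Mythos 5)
Your proposal is correct and uses the same max-cut argument as the paper: both take a bipartition maximising the number of cross edges and derive a contradiction from moving a vertex with too few neighbours on the other side. The only cosmetic difference is that you phrase the conclusion as $\deg_H(v)\geq\frac{1}{2}\deg_G(v)$ rather than invoking the bound $2d$ directly, which is a harmless (indeed slightly more general) restatement.
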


\begin{proof}
For a partition $A,B$ of $V(G)$, let $e(A,B)$ be the number of edges between $A$ and $B$. 
Let $A,B$ be a partition of $V(G)$ maximising $e(A,B)$. If some vertex $v$ in $A$ has fewer than $d$ neighbours in $B$, then $v$ has more than $d$ neighbours in $A$, implying that $e(A\setminus\{v\}, B\cup \{v\})>e(A,B)$, which contradicts the choice of $A,B$. Thus each vertex in $A$ has at least $d$ neighbours in $B$, and by symmetry, every vertex in $B$ has at least $d$ neighbours in $A$. The result follows. 
\end{proof}

Let $G$ be a bipartite graph with bipartition $(A,B)$. 
A representation  $\{<_1,\dots,<_d\}$ of $G$ is \emph{consistent} if
for every edge $vw\in E(G)$ with $v\in A$ and $w\in B$, we have $v<_iw$ for all $i\in[d]$.
A representation  $\{<_1,\dots,<_d\}$ of $G$ is \emph{$A$-homogeneous} if
there are integers $a_1,\dots,a_d\in\{-1,+1\}$, such that for every vertex $v\in A$, 
there is a linear ordering $<_v$ of $N_G(v)$, with the property that for $i\in[d]$, 
\begin{itemize}
\item if $a_i=1$ then $N_G(v)$ is ordered in $<_i$ according to $<_v$, and
\item if $a_i=-1$ then $N_G(v)$ is ordered in $<_i$ according to $<'_v$, 
\end{itemize}
where $<'_v$ is the reverse of $<_v$.  The definition of \emph{$B$-homogeneous} is analogous. 
%A separating representation  $\{<_1,\dots,<_d\}$ of $G$ is \emph{$B$-homogeneous} if
%there are integers $b_1,\dots,b_d\in\{-1,+1\}$, such that for every vertex $w\in B$, 
%there is a linear ordering $<_w$ of $N_G(w)$, with the property that for $i\in[d]$, 
%\begin{itemize}
%\item if $b_i=1$ then $N_G(w)$ is ordered in $<_i$ according to $<_w$, and
%\item if $b_i=-1$ then $N_G(w)$ is ordered in $<_i$ according to $<'_w$, 
%\end{itemize}
%where $<'_v$ is the reverse of $<_v$. 
%A separating representation of $G$ is \emph{homogeneous} if it is both $A$-homogeneous and $B$-homogeneous. 

\begin{lem}
\label{ForceHomo}
Suppose that for some positive integers $d$ and $t$, there is a graph $G$ with average degree at least $2^{d+2}( 2^{d+1}t)^{2^{d-1}}$ and separation dimension at most $d$. Then there is a bipartite subgraph $G'$ of $G$ with bipartition $(A',B')$, with minimum degree at least $t$, such that $G'$ has a $d$-dimensional consistent separating representation that is $A'$-homogeneous or $B'$-homogeneous. 
\end{lem}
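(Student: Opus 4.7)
The plan is to proceed in three stages, extracting progressively more structured bipartite subgraphs of $G$ while maintaining a large minimum degree.

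First, apply \cref{DropMinDegree} to $G$ and then \cref{DropBipartite} to the resulting subgraph, obtaining a bipartite subgraph $H$ with bipartition $(A,B)$ and minimum degree at least $D_0 := 2^d(2^{d+1}t)^{2^{d-1}}$. Without loss of generality, with the choice being made based on the relative sizes of $|A|$ and $|B|$, I aim to build a subgraph $G'$ exhibiting $A'$-homogeneity.

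Second, achieve homogeneity on the $A$-side by iterated Erd\H{o}s--Szekeres. For each $v \in A$, the $d$ orderings $<_1,\dots,<_d$ restrict to $d$ linear orderings of $N_H(v)$. Iterate Erd\H{o}s--Szekeres $d-1$ times: at step $j$, starting from a subset $S$ on which $<_1,\dots,<_j$ pairwise agree up to reversal with the pivot $<_1$, extract $S' \subseteq S$ with $|S'| \geq |S|^{1/2}$ on which $<_{j+1}$ also agrees up to reversal with $<_1$ (equal-or-reverse is transitive via the pivot). After $d-1$ steps the final set $N^\star(v) \subseteq N_H(v)$ satisfies $|N^\star(v)| \geq D_0^{1/2^{d-1}}$, and all $d$ orderings restricted to $N^\star(v)$ equal or reverse a single ordering $<_v$, with sign pattern $a(v) \in \{\pm 1\}^d$. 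A pigeonhole over the $2^d$ possible patterns yields $A^\star \subseteq A$ of size at least $|A|/2^d$ sharing a common $a^\star$, giving an $A^\star$-homogeneous bipartite subgraph.

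Third, impose consistency. For each $v \in A^\star$ and each $i$, the vertex $v$ splits $N^\star(v)$ in $<_i$ into those before and those after $v$; by homogeneity, each part is a prefix or suffix of $<_v$, so restricting $N^\star(v)$ to the larger part preserves homogeneity while halving the neighbourhood. Processing $i = 1,\dots,d$ in turn, with a per-step pigeonhole on the direction choice (factor $2$ each), produces $A^{\star\star} \subseteq A^\star$ of size at least $|A|/2^{2d}$ sharing a common direction pattern $\beta^\star \in \{\pm 1\}^d$, giving a bipartite subgraph $H^{\star\star}$ that is both consistent and $A^{\star\star}$-homogeneous, with $A^{\star\star}$-side minimum degree at least $D_0^{1/2^{d-1}}/2^d$. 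A final application of \cref{DropMinDegree} to $H^{\star\star}$ then extracts $G'$ with both-side minimum degree at least $t$; consistency and $A^{\star\star}$-homogeneity pass to subgraphs.

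The main obstacle is the last step: ensuring the average degree of $H^{\star\star}$ is at least $2t$ so that \cref{DropMinDegree} succeeds. Since the $A^{\star\star}$-side minimum degree is controlled, this reduces to bounding $|B|/|A^{\star\star}|$: if $B$ substantially dominates $A^{\star\star}$ in size, many $B$-vertices may have low degree and the average degree can be too small. Choosing between $A$- and $B$-homogeneity based on the comparative sizes of $|A|$ and $|B|$ in the initial bipartite subgraph provides the necessary control, and the factor $2^{d+2}$ in the hypothesised average degree is calibrated precisely to absorb the per-stage losses of $2^{2d}$ in the vertex count and $2^d$ in the minimum degree.
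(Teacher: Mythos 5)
Your overall architecture (reduce to a bipartite graph of large minimum degree, apply Erd\H{o}s--Szekeres $d-1$ times to each neighbourhood, pigeonhole over sign patterns, then extract a minimum-degree-$t$ subgraph) is the same as the paper's, but you achieve \emph{consistency} by a second vertex pigeonhole after homogeneity, whereas the paper achieves it first by a pigeonhole over \emph{edges} (partitioning $E(G_1)$ into at most $2^d$ classes according to which side of $w$ the vertex $v$ lies on in each $<_i$, and reversing orderings so that the surviving class is consistent). This reordering creates a genuine quantitative gap in your final step. You lose a factor $2^d$ in $|A^\star|$ from the homogeneity pigeonhole and another $2^d$ from the direction pigeonhole, so you can only guarantee $|A^{\star\star}| \geq |A|/2^{2d}$, while the trimmed $A$-side degrees are $D_0^{1/2^{d-1}}/2^d = 2^{d/2^{d-1}}\cdot 2t \leq 4t$. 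Since the only available bound on the other side is $|B^{\star\star}| \leq |B| \leq |A| \leq 2^{2d}|A^{\star\star}|$, the average degree of $H^{\star\star}$ is only at least
\begin{equation*}
\frac{2\cdot 4t\,|A^{\star\star}|}{(1+2^{2d})\,|A^{\star\star}|} = \frac{8t}{1+2^{2d}},
\end{equation*}
which is far below the $2t$ needed for the final application of \cref{DropMinDegree} once $d\geq 2$. Your closing claim that the factor $2^{d+2}$ in the hypothesis is ``calibrated precisely to absorb'' a $2^{2d}$ loss in the vertex count and a $2^d$ loss in the degree is not correct: that prefactor only covers a factor-$2^d$ loss in \emph{average degree} (plus the two factor-$2$ losses from \cref{DropBipartite,DropMinDegree}), which is exactly what the paper's edge pigeonhole costs.

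The paper's ordering avoids this because the consistency step costs nothing in the ratio $|B|/|A|$: it discards edges, not $A$-vertices, and is paid for out of the $2^{d+2}$ prefactor before any degree reduction. The only vertex pigeonhole is then over the $2^{d-1}$ homogeneity patterns (the sign for the pivot $<_1$ is fixed to $+1$, so there are $2^{d-1}$ patterns, not $2^d$), giving $|B_4| \leq |A_3| \leq 2^{d-1}|A_4|$, and the surviving degrees $|M_v| \geq 2^{d+1}t$ are then exactly large enough to force average degree at least $2t$. To repair your argument you would either need to adopt this order of operations, or strengthen the hypothesis (e.g.\ replace $2^{d+1}t$ by roughly $2^{3d}t$ inside the $2^{d-1}$-th power), which would no longer prove the stated lemma. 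A minor further point: after your direction pigeonhole you must also reverse those $<_i$ with $\beta^\star_i=-1$ to obtain literal consistency ($v<_i w$ for all $i$); this is harmless, as it merely flips the corresponding $a_i$, but it should be said.
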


\begin{proof}
Let $\{<_1,\dots,<_d\}$ be a separating representation of $G$. By \cref{DropBipartite}, $G$ contains a bipartite spanning subgraph $G_1$ with average degree at least $2^{d+1}( 2^{d+1}t)^{2^{d-1}}$. Then $\{<_1,\dots,<_d\}$ is a separating representation of $G_1$. Let $(A_1,B_1)$ be the bipartition of $G_1$. 

For each edge $vw\in E(G_1)$ with $v\in A_1$ and $w\in B_1$, let $f(vw)=( f_1(vw), \dots, f_d(vw))$, where $f_i(vw):=1$ if $v<_iw$, and  $f_i(vw):=-1$ if $w<_i v$ (for $i\in[d]$). Since $f$ takes at most $2^d$ values, there is a set $E_2\subseteq E(G_1)$ with $f(vw)=f(xy)$ for all $vw,xy\in E_2$, and $|E_2|\geq |E(G_1)| /2^d$. Let $G_2$ be the spanning subgraph of $G_1$ with edge set $E_2$. Thus $G_2$ has average degree at least $2( 2^{d+1}t)^{2^{d-1}}$. For $i\in[d]$, if $f_i(vw)=-1$ for $vw\in E_2$, then replace $<_i$ by $<'_i$. Thus $\{<_1,\dots,<_d\}$ is a consistent separating representation of $G_2$. This property is maintained for all subgraphs of $G_2$. 

By \cref{DropMinDegree}, $G_2$ contains a subgraph $G_3$ with minimum degree at least $( 2^{d+1}t)^{2^{d-1}}$. Let $A_3:=A_2\cap V(G_3)$ and $B_3:= B_2 \cap V(G_3)$. Thus $(A_3,B_3)$ is a bipartition of $G_3$. Without loss of generality, $|A_3| \geq |B_3|$. 

For each vertex $v\in A_3$, by the Erd\H{o}s-Szekeres Theorem~\citep{ES35} applied $d-1$ times, there is a subset $M_v$ of $N_{G_3}(v)$ that is monotone with respect to $<_1$ in each ordering $<_2,\dots,<_d$,  and 
\begin{equation*}
|M_v| \geq ( \deg_{G_3}(v))^{1/2^{d-1}} \geq 2^{d+1}t.
\end{equation*} 
Let $g(v)=(g_2(v),\dots,g_d(v))$, where  $g_i(v):=1$ if $M_v$ is forward in $<_i$, and $g_i(v):=-1$ if $M_v$ is backward in $<_i$, for $i\in[2,d]$.  Since $g$ takes at most $2^{d-1}$ values, there is a subset $A_4$ of $A_3$ such that $g(v)=g(x)$ for all $v,x\in A_4$, and $|A_4| \geq |A_3| / 2^{d-1}$. Let $a_1:=1$ and for $i\in[2,d]$, let $a_i:=g_i(v)$ for $v\in A_4$. For $v\in A_4$, let $<_v$ be the ordering of $M_v$ in $<_1$. Let $B_4 := \bigcup_{v\in A_4} M_v$. Let $G_4$ be the bipartite subgraph with bipartition $(A_4,B_4)$, where $E(G_4):= \{vw:v\in A_4, w\in M_v\}$. By construction, $\{<_1,\dots,<_d\}$ is an $A_4$-homogeneous consistent separating representation of $G_4$. This property is maintained for all subgraphs of $G_4$. 

Note that every vertex in $A_4$ has degree at least $2^{d+1} t$ in $G_4$, and that 
\begin{equation*}
|V(G_4)| = |A_4| + |B_4| \leq |A_4| + |B_3| \leq |A_4| + |A_3| \leq (1 + 2^{d-1}) |A_4| \leq 2^d |A_4|.
\end{equation*} 
Hence $G_4$ has average degree  
\begin{equation*}
\frac{2|E(G_4)|}{|V(G_4)|} \geq \frac{2^{d+1} t |A_4| }{ 2^d |A_4|}  = 2t.
\end{equation*} 
By \cref{DropMinDegree}, $G_4$ contains a subgraph $G_5$ with minimum degree at least $t$. 
Let $A_5:=A_4\cap V(G_5)$. 
Then $\{<_1,\dots,<_d\}$ is an $A_5$-homogeneous consistent separating representation of $G_5$. 
\end{proof}

%%%%%%%%%

We now prove \cref{AvgDeg}. 

\begin{lem}
\label{SepDim3AvgDeg}
Every graph with separation dimension 3 has average degree less than $2^{29}$. 
\end{lem}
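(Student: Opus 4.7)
My plan is to proceed by contradiction. Suppose $G$ has separation dimension $3$ and average degree at least $2^{29}$. The first step is to apply \cref{ForceHomo} with $d=3$ and $t=4$: one checks that
\begin{equation*}
2^{d+2}(2^{d+1}t)^{2^{d-1}}=2^{5}(2^{4}\cdot 4)^{4}=2^{5}\cdot 2^{24}=2^{29},
\end{equation*}
so the hypothesis holds. Thus we obtain a bipartite subgraph $G'$ of $G$ with bipartition $(A',B')$, minimum degree at least $4$, and a $3$-dimensional consistent separating representation $<_{1},<_{2},<_{3}$ that is $A'$-homogeneous or $B'$-homogeneous. By the symmetry between the two sides of the bipartition (reversing all orderings and swapping $A$ and $B$) I may assume $A'$-homogeneous, with sign pattern $(a_{1},a_{2},a_{3})\in\{+1,-1\}^{3}$ and $a_{1}=+1$.

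\par The first structural step is to observe that consistency forbids $C_{4}$ as a subgraph of $G'$: if $v,v'\in A'$ and $w,w'\in B'$ induce all four edges, then the disjoint pair $vw$ and $v'w'$ cannot be separated in any $<_{i}$, since separation would demand $w<_{i}v'$ (contradicting the edge $v'w$) or $w'<_{i}v$ (contradicting the edge $vw'$). I then plan to case split on $(a_{2},a_{3})$, which by swapping $<_{2}$ and $<_{3}$ reduces to three essentially distinct cases: $(+,+)$, $(+,-)$, and $(-,-)$.

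\par Case $(+,+)$ is short: pick $v\in A'$ with two neighbours $w_{1}<_{v}w_{2}$ and pick $u\in N_{G'}(w_{1})\setminus\{v\}$ (both exist since the minimum degree is at least $4$). Every sign being $+1$ forces $w_{1}<_{i}w_{2}$ in every $<_{i}$, so the disjoint edges $vw_{2}$ and $uw_{1}$ cannot be separated: one orientation would require $w_{2}<_{i}w_{1}$ and the other would require $w_{1}<_{i}v$, both impossible. For the remaining cases, I pick $v\in A'$ with four neighbours $w_{1}<_{v}\cdots<_{v}w_{4}$ and for each $i\in[4]$ pick $u_{i}\in N_{G'}(w_{i})\setminus\{v\}$. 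By $C_{4}$-freeness the $u_{i}$'s are pairwise distinct, and $u_{i}w_{j}\notin E(G')$ for $j\neq i$. For every disjoint pair $vw_{i},u_{j}w_{j}$, consistency rules out one of the two separation directions, and the sign pattern then forces the separation to take place in the unique $<_{k}$ in which $w_{i}<_{k}w_{j}$; this pins $u_{j}$ to a specific ``gap'' in $<_{k}$ between consecutive $w$'s.

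\par The main obstacle will be in completing cases $(+,-)$ and $(-,-)$: the gap constraints coming only from the pairs $vw_{i},u_{j}w_{j}$ do not on their own yield a contradiction. To close the argument I plan to enlarge the configuration by choosing, for each $u_{i}$, additional neighbours $z_{i,m}\in N_{G'}(u_{i})\setminus\{w_{i}\}$ ($m\in\{1,2,3\}$), which by $C_{4}$-freeness avoid $\{w_{1},\ldots,w_{4}\}$ and are distinct across different $u_{i}$'s. Separating the new disjoint pairs $u_{i}z_{i,m}$ versus $vw_{a}$, versus $u_{j}w_{j}$, and versus $u_{j}z_{j,m'}$ yields further positional constraints on the $u_{i}$'s and $z_{i,m}$'s in each ordering. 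Combining these with the earlier gap constraints, together with a pigeonhole argument over which of $<_{1}$ and $<_{2}$ serves as the separating ordering for each pair, should force an impossible placement for some $u_{j}$, yielding the sought contradiction.
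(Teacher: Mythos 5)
Your setup is correct and matches the paper's: the application of \cref{ForceHomo} with $d=3$ and $t=4$ (and the arithmetic $2^{5}(2^{4}\cdot 4)^{4}=2^{29}$), the reduction to an $A'$-homogeneous consistent representation, the observation that consistency forbids $C_4$, and your case $(+,+)$ are all fine and essentially identical to the paper's Case 1. Two minor points: your case $(-,-)$ is redundant, since reversing every $<_v$ flips all three signs and turns $(+,-,-)$ into $(-,+,+)$, which after permuting the orderings is your case $(+,-)$; and the ordering $<_k$ in which a disjoint pair $vw_i$, $u_jw_j$ can be separated is not always unique in the mixed-sign case, since two of the three orderings agree on the relative order of $w_i$ and $w_j$.

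The genuine gap is that the mixed-sign case --- the heart of the lemma --- is not proved but only planned, and the plan as stated is unlikely to close as a purely local argument around a single vertex $v$. You correctly observe that the constraints from the pairs $vw_i$, $u_jw_j$ alone yield no contradiction: each $u_j$ can simply sit to the right of all the $w_i$ in every ordering. But adding second neighbours $z_{i,m}$ of the $u_i$ runs into the same problem one level deeper, because nothing in a bounded local configuration prevents every newly introduced vertex from being placed to the right of everything previously considered. The paper escapes this with two global devices. First, it marks, for each $v\in A$, the rightmost edge at $v$ in $<_v$; since at most $|V(G)|$ edges are marked and there are at least $2|V(G)|$ edges, there is a cycle $C$ of unmarked edges, of length at least $6$ by $C_4$-freeness. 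Second, it takes $v$ to be the \emph{leftmost} vertex of $C$ in $<_1$ --- an extremal choice with no analogue in your configuration --- which pins down the positions of the cycle-neighbours $b,c$ of $v$ and of the next cycle vertex $w$, and then plays the unmarked cycle edge $wb$ against the \emph{marked} edge $wb'$ (where $b'$ is the rightmost neighbour of $w$ in $<_w$, so that $b<_2 b'$) to show that $vb$ and $wb'$ are separated in no ordering. Without some such global ingredient --- an extremal vertex choice plus the marking that guarantees a strictly larger neighbour $b'$ of $w$ --- your pigeonhole over a fixed finite configuration does not obviously terminate in a contradiction, so the main case remains open in your write-up.
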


\begin{proof}
Suppose for the sake of contradiction that there is a graph with separation dimension 3 and average degree at least $2^{29}=2^{3+2}( 2^{3+1}4)^{2^{3-1}}$. By \cref{ForceHomo}, without loss of generality (possibly exchanging the roles of $A$ and $B$), there is a bipartite graph $G$ with bipartition $(A,B)$, with minimum degree at least $4$, such that $G$ has a $3$-dimensional $A$-homogeneous consistent separating representation $\{<_1,<_2,<_3\}$. Thus there are integers $a_1,a_2,a_3\in\{-1,+1\}$, such that for every vertex $v\in A$, there is a linear ordering $<_v$ of $N_G(v)$, with the property that for $i\in[3]$, 
\begin{itemize}
\item if $a_i=1$ then $N_G(v)$ is ordered in $<_i$ according to $<_v$, and
\item if $a_i=-1$ then $N_G(v)$ is ordered in $<_i$ according to $<'_v$.
\end{itemize}

%\comment{not needed?} First suppose that $G$ contains a 4-cycle $(a,b,c,d)$, where $a,c\in A$ and $b,d\in B$. By consistency, $a,c <_i b,d$ for each $i\in[3]$. In such an ordering no two disjoint edges are separated. Thus the 4-cycle has no consistent separating representation (in any dimension). So we may assume that $G$ contains no 4-cycle. 

By symmetry (since we may reverse all orders $<_v$), we may assume that at least two of $a_1,a_2,a_3$ are $+1$. Reordering leaves two cases: $a_1=a_2=a_3=1$, or $a_1=a_2=1$ and $a_3=-1$.
 
Case 1. $a_1=a_2=a_3=1$: Let $v$ be a vertex in $A$. Let $b,c$ be neighbours of $v$ with $b<_v c$. Since $a_1=a_2=a_3=1$, we have $v<_ib<_ic$ for each $i\in[3]$. Let $x$ be a neighbour of $b$ other than $v$ (which exists since $G$ has minimum degree at least 3). Then $vc$ and $bx$ are separated in no ordering, which is a contradiction. 

Case 2. $a_1=a_2=1$ and $a_3=-1$: For each vertex $v\in A$, \emph{mark} the rightmost edge incident with $v$ according to the ordering $<_v$ of $N_G(v)$. Since $G$ has at least $2|V(G)|$ edges and at most $|V(G)|$ edges are marked, $G$ contains a cycle $C$ of unmarked edges. As shown above, $C$ is not a 4-cycle. So $|C| \geq 6$. 

Let $v$ be the leftmost vertex in $C$ in $<_1$. 
Let $b$ and $c$ be the neighbours of $v$ in $C$. 
Without loss of generality, $b<_v c$. 
Since $a_1=a_2=1$ and $a_3=-1$, we have that $v <_1 b <_1 c$ and $v <_2 b <_2 c$ and $v <_3 c <_3 b$. 
Let $w$ be the neighbour of $b$ in $C$, such that $w\neq v$. 
%, which exists since $|C|\geq 6$. 
Note that $v,w\in A$ and $b,c\in B$. 
Since $b$ is between $v$ and $c$ in $<_1$ and $<_2$, 
the edges $vc$ and $wb$ are not separated in $<_1$ and $<_2$. 
Thus $vc$ and $wb$ are separated in $<_3$, implying
$v <_3 c <_3 w <_3 b$ by consistency. 
By the choice of $v$ and by consistency, $v <_1 w <_1 b <_1 c$. 
And by consistency, $v <_2 w <_2 b$ or $w <_2 v <_2 b$.

Let $b'$ be the rightmost neighbour of $w$ in $<_w$. 
Thus $wb'$ is marked. 
Since $w$ is between $v$ and $b$ in $<_1$ and $<_3$, 
the edges $vb$ and $wb'$ are not separated in $<_1$ and $<_3$.
Thus $vb$ and $wb'$ are separated in $<_2$. 
Since $a_2=+1$ and $b'$ is the rightmost neighbour of $w$ in $<_w$, we have $b <_2 b'$. 
Thus $v <_2 w <_2 b <_2 b'$ or $w <_2 v <_2 b <_2 b'$. 
In both cases, $vb$ and $wb'$ are not separated in $<_2$, which is a contradiction. 
\end{proof}

\citet{ABCMR18} state that it is open whether graphs with bounded separation dimension have bounded chromatic number. Since separation dimension is non-decreasing under taking subgraphs, \cref{SepDim3AvgDeg} implies:

\begin{cor}
\label{SepDim3Colour}
Every graph with separation dimension 3 is $2^{29}$-colourable.
\end{cor}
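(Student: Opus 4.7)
The plan is to combine Lemma \ref{SepDim3AvgDeg} with the standard observation that graphs in which every subgraph has bounded average degree admit a greedy colouring with correspondingly few colours. Two ingredients are needed: monotonicity of separation dimension under taking subgraphs, and a degeneracy-based colouring argument.

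For monotonicity, if $G$ has separation dimension $d$ with separating representation $\{<_1,\dots,<_d\}$, then for any subgraph $H$ of $G$ the restrictions of $<_1,\dots,<_d$ to $V(H)$ form a separating representation of $H$: any two disjoint edges of $H$ are also disjoint edges of $G$ and hence separated in some $<_i$. Consequently, every subgraph of a graph with separation dimension $3$ itself has separation dimension at most $3$, and so by \cref{SepDim3AvgDeg} has average degree strictly less than $2^{29}$.

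For the colouring step, I would apply the average-degree bound to every subgraph of $G$ to conclude that every non-empty subgraph of $G$ contains a vertex of degree at most $2^{29}-1$. Repeatedly removing such a vertex yields an ordering $v_1,v_2,\dots,v_n$ of $V(G)$ in which each $v_i$ has at most $2^{29}-1$ neighbours among $v_{i+1},\dots,v_n$; in other words, $G$ is $(2^{29}-1)$-degenerate. Colouring greedily in the reverse order $v_n,v_{n-1},\dots,v_1$ then uses at most $2^{29}$ colours, since when $v_i$ is processed it has at most $2^{29}-1$ already-coloured neighbours.

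There is essentially no obstacle here; the only minor point to verify is that the \emph{strict} inequality in \cref{SepDim3AvgDeg} is what guarantees a vertex of degree at most $2^{29}-1$ (rather than merely at most $2^{29}$), which is precisely the bound needed to fit the greedy colouring into $2^{29}$ colours.
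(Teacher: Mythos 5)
Your proposal is correct and follows exactly the route the paper intends: the paper's proof is the single remark that separation dimension is monotone under taking subgraphs, leaving the degeneracy and greedy-colouring details implicit, which you have simply spelled out. The only point worth a passing mention is that subgraphs may have separation dimension strictly less than $3$, but \cref{SepDim3AvgDeg} (whose proof only uses ``separation dimension at most $3$'') still applies, so there is no gap.
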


%\begin{thm}[???] 
%Let $V_1,V_2 $ be disjoint sets of vertices in a graph $G$, such that there is an edge between every $s_1$-subset of $V_1$ and every $s_2$-subset of $V_2$, then 
%$\pi(G) \geq \min \{ \log \frac{|V_1|}{s_1} , \log \frac{|V_2|}{s_2} \}$.
%\end{thm}
%
%\begin{cor}
%\begin{equation*}\pi(K_{n,n}) \geq \log n\end{equation*}
%\end{cor}

Recall that \citet{ABCMR18} proved that every $n$-vertex graph with separation dimension $s\geq 2$ has average degree $O(\log^{s-2} n)$. Their proof is by induction on $s$. Applying \cref{AvgDeg} in the base case leads to the following result:

\begin{cor}
For $s\geq 3$, every $n$-vertex graph with separation dimension $s$ has average degree $O(\log^{s-3} n)$. 
\end{cor}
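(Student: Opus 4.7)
The plan is to re-run the inductive argument of Alon et al.\ but with \cref{AvgDeg} (equivalently \cref{SepDim3AvgDeg}) installed as the base case, rather than the planarity-based bound at $s=2$. Recall the shape of their induction: they show that if every $n$-vertex graph of separation dimension at most $s-1$ has average degree at most $f(n)$, then every $n$-vertex graph of separation dimension at most $s$ has average degree at most $O(f(n)\log n)$. Iterating from $s=2$ with $f(n)=O(1)$ yields $O(\log^{s-2} n)$, and iterating from $s=3$ with $f(n)=O(1)$ will yield $O(\log^{s-3} n)$ for all $s\geq 3$.

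Concretely, I would proceed by induction on $s\geq 3$. In the base case $s=3$, \cref{AvgDeg} gives a constant upper bound $c$ on the average degree, which is $O(\log^0 n)=O(1)$. For the inductive step, assume the bound $O(\log^{s-4}n)$ holds for separation dimension at most $s-1$, and let $G$ be an $n$-vertex graph with separation dimension $s$. Fix a separating representation $\{<_1,\dots,<_s\}$ of $G$. Following Alon et al., I partition $V(G)$ according to position in (say) $<_s$: split $V(G)$ into consecutive blocks $B_1,\dots,B_t$ of a suitably chosen size $\sim n/2^i$ at dyadic scales (their argument is essentially an interval-scheme/Hanani–Tverberg style decomposition), so that every edge of $G$ either lies within a single block at some scale, or its two endpoints fall in different blocks at that scale. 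Edges of the first type are covered by the remaining $s-1$ orderings (restricted to the block), so by induction the number of such edges per block is $O(|B|\log^{s-4}|B|)$; edges of the second type are handled by the outer ordering $<_s$ and contribute only $O(\log n)$ per vertex per scale, because of separation across scales.

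Summing over the $O(\log n)$ scales and over blocks at each scale, one obtains
\begin{equation*}
|E(G)| \leq O(\log n)\cdot O(n\log^{s-4}n) = O(n\log^{s-3}n),
\end{equation*}
so the average degree is $O(\log^{s-3}n)$, as required. The inductive step is exactly the one Alon et al.\ already performed; no new work is needed there. The only change is the base case, which is supplied by \cref{AvgDeg}.

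The main ``obstacle''—really just a point of care rather than a difficulty—is to check that their induction really does multiply by $O(\log n)$ at each step and does not depend on the specific base case, so that shifting the base from $s=2$ to $s=3$ simply reduces the final exponent by $1$. Once that is verified (by inspection of their argument), the corollary follows immediately.
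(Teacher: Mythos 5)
Your proposal matches the paper's argument exactly: the paper likewise observes that the Alon et al.\ bound of $O(\log^{s-2} n)$ is proved by induction on $s$ with a multiplicative $O(\log n)$ loss per step, and simply substitutes \cref{AvgDeg} as the new base case at $s=3$ to drop the exponent by one. Your additional sketch of the inductive step is just a reconstruction of the Alon et al.\ machinery, which the paper cites rather than reproves, so no further work is needed.
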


For each $s\geq 4$, it remains open whether graphs of separation dimension at most $s$ satisfy analogues of \cref{SepDim3AvgDeg,SepDim3Colour}.

%\bibliographystyle{myNatbibStyle}
%\bibliography{myBibliography}

\def\soft#1{\leavevmode\setbox0=\hbox{h}\dimen7=\ht0\advance \dimen7
  by-1ex\relax\if t#1\relax\rlap{\raise.6\dimen7
  \hbox{\kern.3ex\char'47}}#1\relax\else\if T#1\relax
  \rlap{\raise.5\dimen7\hbox{\kern1.3ex\char'47}}#1\relax \else\if
  d#1\relax\rlap{\raise.5\dimen7\hbox{\kern.9ex \char'47}}#1\relax\else\if
  D#1\relax\rlap{\raise.5\dimen7 \hbox{\kern1.4ex\char'47}}#1\relax\else\if
  l#1\relax \rlap{\raise.5\dimen7\hbox{\kern.4ex\char'47}}#1\relax \else\if
  L#1\relax\rlap{\raise.5\dimen7\hbox{\kern.7ex
  \char'47}}#1\relax\else\message{accent \string\soft \space #1 not
  defined!}#1\relax\fi\fi\fi\fi\fi\fi}

\end{document}